\documentclass{amsart}
\usepackage{graphicx}
\usepackage[latin2]{inputenc}
\usepackage{amsmath,amssymb,amsbsy,amsthm,amsfonts}
\usepackage{epsfig} % postscript figures

\usepackage[active]{srcltx}

%%%%%%%%%%%%%%%%%%%%%%%%%%%%%%%%%%%%%%q%%%%%%%%%%%%%%%%%%%%%%%%%%

% postscript hack - vytiskne datum a verzi na kazde strance

% [arxiv_v2: inline-PS \special stripped, 219 chars]

%%%%%%%%%%%%%%%%%%%%%%%%%%%%%%%%%%%%%%%%%%%%%%%%%%%%%%%%%%%%%%%%

%\qedhere

\begin{document}
%\numberwithin{equation}{section} \marginparwidth=2cm
\def\note#1{\marginpar{\small #1}}

\def\tens#1{\pmb{\mathsf{#1}}}
\def\vec#1{\boldsymbol{#1}}

\def\norm#1{\left|\!\left| #1 \right|\!\right|}
\def\fnorm#1{|\!| #1 |\!|}
\def\abs#1{\left| #1 \right|}
\def\ti{\text{I}}
\def\tii{\text{I\!I}}
\def\tiii{\text{I\!I\!I}}

\newcommand{\nm}[1]{\|#1\|}
\def\diver{\mathop{\mathrm{div}}\nolimits}
\def\grad{\mathop{\mathrm{grad}}\nolimits}
\def\Div{\mathop{\mathrm{Div}}\nolimits}
\def\Grad{\mathop{\mathrm{Grad}}\nolimits}

\def\tr{\mathop{\mathrm{tr}}\nolimits}
\def\cof{\mathop{\mathrm{cof}}\nolimits}
\def\det{\mathop{\mathrm{det}}\nolimits}
\def\spt{\mathop{\mathrm{spt}}\nolimits}
\def\dist{\mathop{\mathrm{dist}}\nolimits}

\def\lin{\mathop{\mathrm{span}}\nolimits}
\def\pr{\noindent \textbf{Proof: }}
\def\pp#1#2{\frac{\partial #1}{\partial #2}}
\def\dd#1#2{\frac{\d #1}{\d #2}}

\def\T{\mathcal{T}}
\def\R{\mathcal{R}}
\def\bx{\vec{x}}
\def\be{\vec{e}}
\def\bef{\vec{f}}
\def\bec{\vec{c}}
\def\bs{\vec{s}}
\def\ba{\vec{a}}
\def\bn{{\vec{n}}}
\def\bphi{\vec{\varphi}}
\def\btau{\vec{\tau}}
\def\bc{\vec{c}}
\def\bg{\vec{g}}

\def\bW{\tens{W}}
\def\bT{\tens{T}}
\def\bD{\tens{D}}
\def\bF{\tens{F}}
\def\bB{\tens{B}}
\def\bV{\tens{V}}
\def\bS{\tens{S}}
\def\bI{\tens{I}}
\def\bi{\vec{i}}
\def\bv{\vec{v}}
\def\bff{\vec{f}}
\def\bfi{\vec{\varphi}}
\def\bk{\vec{k}}
\def\b0{\vec{0}}
\def\bom{\vec{\omega}}
\def\bw{\vec{w}}
\def\p{\pi}
\def\bu{\vec{u}}

\def\ID{\mathcal{I}_{\bD}}
\def\IP{\mathcal{I}_{p}}
\def\Pn{(\mathcal{P})}
\def\Pe{(\mathcal{P}^{\eta})}
\def\Pee{(\mathcal{P}^{\varepsilon, \eta})}

\def\Ln#1{L^{#1}_{\bn}}

\def\Wn#1{W^{1,#1}_{\bn}}

\def\Lnd#1{L^{#1}_{\bn, \diver}}

\def\Wnd#1{W^{1,#1}_{\bn, \diver}}

\def\Wndm#1{W^{-1,#1}_{\bn, \diver}}

\def\Wnm#1{W^{-1,#1}_{\bn}}

\def\Lb#1{L^{#1}(\partial \Omega)}

\def\Lnt#1{L^{#1}_{\bn, \btau}}

\def\Wnt#1{W^{1,#1}_{\bn, \btau}}

\def\Lnd#1{L^{#1}_{\bn, \btau, \diver}}

\def\Wntd#1{W^{1,#1}_{\bn, \btau, \diver}}

\def\Wntdm#1{W^{-1,#1}_{\bn,\btau, \diver}}

\def\Wntm#1{W^{-1,#1}_{\bn, \btau}}

%------------------------------------------------
\newtheorem{Theorem}{Theorem}[section]
%{\theorembodyfont{\rmfamily} \newtheorem{Example}{Example}}
\newtheorem{Example}{Example}[section]
\newtheorem{Lem}{Lemma}[section]
\newtheorem{Rem}{Remark}[section]
\newtheorem{Def}{Definition}[section]
\newtheorem{Col}{Corollary}[section]
\newtheorem{Proposition}{Proposition}[section]

%--------------------------------------------------------------------------
\newcommand{\Om}{\Omega}
\newcommand{ \vit}{\hbox{\bf u}}
\newcommand{ \Vit}{\hbox{\bf U}}
\newcommand{ \vitm}{\hbox{\bf w}}
\newcommand{ \ra}{\hbox{\bf r}}
\newcommand{ \vittest }{\hbox{\bf v}}
\newcommand{ \wit}{\hbox{\bf w}}
\newcommand{ \fin}{\hfill $\square$}

\newcommand{\ZZ}{\mathbb{Z}}
\newcommand{\CC}{\mathbb{C}}
\newcommand{\NN}{\mathbb{N}}
\newcommand{\V}{\zeta}
\newcommand{\RR}{\mathbb{R}}
\newcommand{\EE}{\varepsilon}
\newcommand{\Lip}{\textnormal{Lip}}
\newcommand{\XX}{X_{t,|\textnormal{D}|}}
\newcommand{\PP}{\mathfrak{p}}
\newcommand{\VV}{\bar{v}_{\nu}}
\newcommand{\QQ}{\mathbb{Q}}
\newcommand{\HH}{\ell}
\newcommand{\MM}{\mathfrak{m}}
\newcommand{\rr}{\mathcal{R}}
\newcommand{\tore}{\mathbb{T}_3}
\newcommand{\Z}{\mathbb{Z}}
\newcommand{\N}{\mathbb{N}}

\newcommand{\F}{\overline{\boldsymbol{\tau} }}

\newcommand{\nsa}{$\mathcal L(\alpha)$}
\newcommand{\ns}{$\mathcal{NS}$\ }
\newcommand{\moy} {\overline {\vit} }
\newcommand{\moys} {\overline {u} }
\newcommand{\mmoy} {\overline {\wit} }
\newcommand{\g} {\nabla }
\newcommand{\G} {\Gamma }
\newcommand{\x} {{\bf x}}
\newcommand{\E} {\varepsilon}
\newcommand{\BEQ} {\begin{equation} }
\newcommand{\EEQ} {\end{equation} }
\makeatletter
\@addtoreset{equation}{section}
\renewcommand{\theequation}{\arabic{section}.\arabic{equation}}

\newcommand{\hs}{{\rm I} \! {\rm H}_s}
\newcommand{\esp} [1] { {\bf I} \! {\bf H}_{#1} }

\newcommand{\vect}[1] { \overrightarrow #1}

\newcommand{\hsd}{{\rm I} \! {\rm H}_{s+2}}

\newcommand{\HS}{{\bf I} \! {\bf H}_s}
\newcommand{\HSD}{{\bf I} \! {\bf H}_{s+2}}

\newcommand{\hh}{{\rm I} \! {\rm H}}
\newcommand{\lp}{{\rm I} \! {\rm L}_p}
\newcommand{\leb}{{\rm I} \! {\rm L}}
\newcommand{\lprime}{{\rm I} \! {\rm L}_{p'}}
\newcommand{\ldeux}{{\rm I} \! {\rm L}_2}
\newcommand{\lun}{{\rm I} \! {\rm L}_1}
\newcommand{\linf}{{\rm I} \! {\rm L}_\infty}
\newcommand{\expk}{e^{ {\rm i} \, {\bf k} \cdot \x}}
\newcommand{\proj}{{\rm I}Ę\! {\rm P}}
\newcommand{\eu}[1]{|#1|}

\renewcommand{\theenumi}{\Roman{section}.\arabic{enumi}}

\newcounter{taskcounter}[section]

\newcommand{\bib}[1]{\refstepcounter{taskcounter} {\begin{tabular}{ l p{13,5cm}} \hskip -0,2cm [\Roman{section}.\roman{taskcounter}] & {#1}
\end{tabular}}}

\renewcommand{\thetaskcounter}{\Roman{section}.\roman{taskcounter}}

\newcounter{technique}[section]

\renewcommand{\thetechnique}{\roman{section}.\roman{technique}}

\newcommand{\tech}[1]{\refstepcounter{technique} {({\roman{section}.\roman {technique}}) {\rm  #1}}}

\newcommand{\B}{\mathcal{B}}
\newcommand{\eR}{\mathbb R}
\newcommand{\DD}{{\mathcal D}}
%---------------------------------------------------------------------------

%\tableofcontents

\title[Leray-$\alpha$ Model with Navier boundary condition]{Analysis of the Leray-$\alpha$ model with Navier slip boundary condition}

\author[H. Ali]{Hani Ali}
\address{IRMAR , UMR CNRS 6625, Universit\'{e} Rennes1, Campus Beaulieu, 35042 Rennes cedex, France}
\email{hani.ali@univ-rennes1.fr}

\author[P. Kaplick\'y]{Petr Kaplick\'y}
\address{Charles University, Faculty of Mathematics and Physics, %Dept. of Mathematical Analysis
Sokolovsk\'{a}~83,\\
186~75~Prague~8, Czech~Republic}
\email{kaplicky@karlin.mff.cuni.cz}

\keywords{turbulence model, existence, weak solution}
\subjclass[2000]{35Q30,35Q35,76F60}

\begin{abstract}
In this paper, we establish the existence and the regularity of a unique weak solution to turbulent flows in a bounded domain $\Omega\subset\mathbb R^3$ governed by the so-called Leray-$\alpha$ model. We consider the Navier slip boundary conditions for the velocity. Furthermore, we show that, when the filter coefficient $\alpha$ tends to zero, the weak solution constructed converges to a suitable weak solution to the incompressible Navier Stokes equations subject to the Navier boundary condition. Similarly, if $\lambda\to1-$ we recover a solution to the Leray-$\alpha$ model with the homogeneous Dirichlet boundary conditions.
%The inviscid limit of the unique solution of the Leray-$\alpha$ equation to the unique solution of the Euler-$\alpha$ equation is also established. 
\end{abstract}

\thanks{Research of Petr Kaplick\'y was supported by the
grant GACR 201/09/0917, and also partially by the research project
MSM 0021620839.
}

\maketitle

%\maketitle

\section{Introduction}
Let $\Omega \subset \mathbb{R}^3$ be a bounded domain with $C^\infty$ boundary, $T\in (0, \infty)$, and $\alpha>0$.
Our goal is to study properties of the Leray-$\alpha$ model (\nsa)   
\begin{align}
\diver \bv &=0, \label{BM}\\
\bv_{,t} + \diver (\overline{\bv} \otimes \bv) -  2\nu \diver \bD( \bv)  &= -\nabla p + \bef,\label{BLM}\\
-\alpha^2 \diver \bD( \overline{\bv}) +   \overline{\bv}+\nabla \pi
&=\bv,\quad
\diver \overline{\bv} =0.\label{TKE}
%-\alpha^2 \Delta \overline{\bv} +   \overline{\bv}
%&=\bv,
\end{align}
considered in $(0,T)\times \Omega$.
Here, all appearing quantities are smoothed. The unknown functions are the fluid velocity field $\bv$ and the pressure $p$. The external body force $\bef$ and the viscosity $\nu>0$ are given.

The system is completed by an initial condition  
\begin{equation}
\bv(0,x)=\bv_0(x) \quad \textrm{ in } \Omega,
\label{ID}
\end{equation}
and a boundary condition
\begin{align}
\bv \cdot \bn =0&, \label{bc1}\quad
\lambda \bv_{\btau} + (1-\lambda) ( \bD( \bv)
\bn)_{\btau}=0 \quad \textrm{ on } (0,T)\times \partial \Omega,
\\
\overline{\bv} \cdot \bn =0&,\quad
\lambda \overline{\bv}_{\btau} + (1-\lambda)  ( \bD( \overline{\bv})
\bn)_{\btau}=0 \quad \textrm{ on } (0,T)\times \partial \Omega.
\label{bc123}
\end{align}

Here, $\bn=\bn(\bx)$ is the outer normal located at $\bx\in \partial
\Omega$ to the boundary, $\bw_{\btau}:= \bw - (\bw \cdot \bn)\bn$ is
the projection of a vector $\bw = \bw(\bx)$  to the tangent plane of
the boundary at $\bx$, and the parameter $
\lambda\in [0,1]$ homotopically connects perfect slip boundary condition
when $\lambda = 0$ with no-slip boundary conditions when $\lambda =
1$. If $0<\lambda<1$, then \eqref{bc1} is called Navier slip
boundary conditions. In this paper we assume that $\lambda$ is any
number from $[0,1)$. 

%The model \nsa, see \cite{CHOT05}, is a good regularization of the Navier Stokes equations. The theory of this model and many numerical tests have been studied under periodic boundary conditions \cite{CHOT05,GH05}. Here, we investigate this model with the Navier's slip or (perfect) slip boundary conditions for the velocity $\bv$.

We start our investigation showing that the problem \eqref{BM}-\eqref{bc123} possesses a unique weak solution. Since the existence and regularity theory of the problem \eqref{TKE} with boundary condition \eqref{bc123} is well known, compare Lemma~\ref{lem:st} and Corollary~\ref{cor:stokes2}, $\overline{\bv}$ can be always uniquely reconstructed from $\bv$. In this sense we understand $\overline{\bv}$ in the whole article and we concentrate only on properties of $(\bv,p)$.

\begin{Theorem}\label{thm:main1}
Let %$\alpha>0$ , $\Omega \in \mathcal{C}^{2}$ and 
$\bef \in L^{2}(0,T;W^{-1,2}_{\bn})$, $\bv_0\in L^2_{\bn,\diver}$. Then there exists a unique solution $(\bv, p)$ to the system \eqref{BM}--\eqref{TKE}  such that
\begin{align}
\bv &\in \mathcal{C}_{}(0,T;L^2_{\bn,\diver}) \cap
L^2(0,T;W^{1,2}_{\bn,\diver}),\label{bv12}\\
\bv_{,t}&\in L^{2}(0,T;W^{-1,2}_{\bn}),
\label{bvt}\\
p&\in L^{2}(0,T;L^{2})
\label{psp}\\
\label{psp1} \int_\Omega p d\bx&=0\quad\mbox{for a.e. $t\in(0,T)$}.
\end{align}
and
\begin{equation}
\begin{split}
\int_0^T \langle \bv_{,t}, \bw \rangle  -  (\overline{\bv} \otimes \bv, \nabla
\bw) +
\frac{2\nu\lambda}{1-\lambda}(\bv, \bw)_{\partial \Omega} +  2\nu(
\bD(\bv), \bD(\bw) )\; dt\\
= \int_0^T (p, \diver \bw) +  \langle \bef, \bw \rangle \; dt
\qquad \textrm{ for all } \bw\in L^{2}(0,T; W^{1,2}_{\bn}).
\end{split}\label{weak1}
\end{equation}
The initial conditions are attained in the following sense
\begin{equation}
\lim_{t\to 0+}\|\bv(t)-\bv_0\|_2^2 =0. \label{inca}
\end{equation}
Moreover, the solution ($\vec{v}, p)$ satisfies the local energy equality
%\marginpar{Check the formulation!}
\begin{equation}
\label{local alpha}
\begin{array}{lcc}
\displaystyle \frac{1}{2}\int_{\Omega} (|{\vec{v}}|^2\phi)(t,\vec{x}) \ d\vec{x}+ \nu\int_{0}^{t}\int_{\Omega}|\nabla \vec{v}_{}|^{2}\phi \ d\vec{x}dt\\
\hskip 1cm  \displaystyle  =   \displaystyle   \frac{1}{2}\int_{\Omega} {|\vec{v}_0|}^2\phi(0,\vec{x}) \ d\vec{x}  +  \displaystyle \int_{0}^{t}\int_{\Omega} \frac{|\vec{v}_{}|^{2}}{2} \left(\phi_t + \nu \Delta \phi \right) \ d\vec{x}dt\\
 \hskip 2cm \displaystyle + \int_{0}^{t}\int_{\Omega}\displaystyle \left( \frac{|\vec{v}_{}|^{2}}{2} \overline{\vec{v}_{}}+ p\vec{v}_{}\right) \cdot \nabla \phi   \  d\vec{x}dt+ \displaystyle \int_{0}^{t}  \langle \bef,  \vec{v}\phi\rangle \ dt,
\end{array}
\end{equation}
for all $t \in (0,T)$ and for all  
%\marginpar{why non negative?}
non negative functions  $\phi \in C^{\infty}(\Omega\times\mathbb R)$ and  $\spt \phi \subset\subset \mathbb R\times\Omega$.
 \label{TH1}
\end{Theorem}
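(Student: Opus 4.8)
My plan is the standard Galerkin-plus-compactness scheme, exploiting that the filter operator $\bv\mapsto\overline{\bv}$ is, by Lemma~\ref{lem:st} and Corollary~\ref{cor:stokes2}, a linear smoothing operator that is bounded from $L^2_{\bn,\diver}$ into $W^{2,2}\cap W^{1,2}_{\bn,\diver}$ (and continuously from $W^{1,2}_{\bn,\diver}$ into $W^{3,2}$), so it gains two derivatives. First I would build a Galerkin basis $\{\bw_k\}$ of $W^{1,2}_{\bn,\diver}$ — for instance the eigenfunctions of the Stokes-type operator associated with the bilinear form $2\nu(\bD(\cdot),\bD(\cdot))+\tfrac{2\nu\lambda}{1-\lambda}(\cdot,\cdot)_{\partial\Omega}$, which is coercive on $W^{1,2}_{\bn,\diver}$ by Korn's inequality since $\lambda<1$. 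One solves the finite-dimensional ODE system for $\bv^N=\sum_{k\le N}c_k^N(t)\bw_k$, where $\overline{\bv^N}$ is reconstructed by the filter; local existence is immediate and the a~priori estimate below makes it global.

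The basic energy estimate comes from testing with $\bv^N$ itself: the convective term $(\overline{\bv^N}\otimes\bv^N,\nabla\bv^N)$ vanishes because $\diver\overline{\bv^N}=0$ and $\overline{\bv^N}\cdot\bn=0$ on $\partial\Omega$ (this is where the divergence-free smoothing and the boundary condition on $\overline{\bv}$ are essential), leaving
\begin{equation}
\tfrac12\tfrac{d}{dt}\|\bv^N\|_2^2+\tfrac{2\nu\lambda}{1-\lambda}\|\bv^N_{\btau}\|_{\partial\Omega}^2+2\nu\|\bD(\bv^N)\|_2^2=\langle\bef,\bv^N\rangle.
\end{equation}
Korn's inequality then yields the uniform bounds $\bv^N\in L^\infty(0,T;L^2_{\bn,\diver})\cap L^2(0,T;W^{1,2}_{\bn,\diver})$. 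For the time derivative one estimates $\langle\bv^N_{,t},\bw\rangle$ for $\bw\in W^{1,2}_{\bn}$ term by term; the only nonstandard piece is $(\overline{\bv^N}\otimes\bv^N,\nabla\bw)$, which is controlled by $\|\overline{\bv^N}\|_\infty\|\bv^N\|_2\|\nabla\bw\|_2$ using the $W^{2,2}\hookrightarrow L^\infty$ regularity of the filter, giving $\bv^N_{,t}$ bounded in $L^2(0,T;W^{-1,2}_{\bn})$. Aubin--Lions then gives a subsequence with $\bv^N\to\bv$ strongly in $L^2(0,T;L^2)$ and weakly in the energy spaces; continuity of the filter upgrades this to $\overline{\bv^N}\to\overline{\bv}$ strongly in, say, $L^2(0,T;W^{1,2})$, which is more than enough to pass to the limit in the bilinear convective term. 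The pressure $p$ is recovered by the usual de~Rham / Bogovskii argument from the fact that $\bv_{,t}+\diver(\overline{\bv}\otimes\bv)-2\nu\diver\bD(\bv)-\bef$ annihilates divergence-free test fields, and is normalized by \eqref{psp1}; its $L^2(0,T;L^2)$ bound follows since each other term lies in $L^2(0,T;W^{-1,2})$ and the Bogovskii operator is bounded. The initial condition \eqref{inca} follows from $\bv\in\mathcal C(0,T;L^2_{\bn,\diver})$ (a consequence of $\bv\in L^2(W^{1,2})$, $\bv_{,t}\in L^2(W^{-1,2})$) together with weak-$L^2$ attainment at $t=0$.

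Uniqueness is obtained by taking two solutions $\bv_1,\bv_2$, writing $\bw=\bv_1-\bv_2$, testing the difference of the weak formulations with $\bw$ (legitimate since $\bw\in L^2(W^{1,2}_{\bn})$ and $\bw_{,t}\in L^2(W^{-1,2}_{\bn})$), and noting the pressure drops out; the difference of convective terms is $(\overline{\bw}\otimes\bv_1,\nabla\bw)+(\overline{\bv_2}\otimes\bw,\nabla\bw)$, the second of which vanishes and the first of which is bounded by $\|\overline{\bw}\|_\infty\|\bv_1\|_2\|\nabla\bw\|_2\le C\|\bw\|_2\,\|\bv_1\|_2\,\|\nabla\bw\|_2$ via the two-derivative gain of the filter, absorbing the gradient into the viscous term and closing a Gronwall inequality in $\|\bw\|_2^2$. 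For the local energy equality \eqref{local alpha} one uses that, thanks to the smoothing, $\diver(\overline{\bv}\otimes\bv)\in L^2(0,T;L^2)$ and hence the solution has enough regularity that the equation holds in $L^2(0,T;L^2)$; then testing \eqref{BLM} with $\bv\phi$ for $\phi\in C^\infty$ with compact support in $\Omega$ is rigorous, the pressure term becomes $-(p\bv,\nabla\phi)$ after integration by parts, and the viscous term produces $\nu\int|\nabla\bv|^2\phi$ plus $\tfrac\nu2\int|\bv|^2\Delta\phi$ (the boundary terms from Korn/integration by parts disappear because $\spt\phi\subset\subset\Omega$), giving exactly \eqref{local alpha}. \emph{The main obstacle} I anticipate is not any single estimate but the careful bookkeeping of the filter's mapping properties — making sure the regularity claimed for $\overline{\bv}$ in Lemma~\ref{lem:st}/Corollary~\ref{cor:stokes2} is exactly what is needed to give $\diver(\overline{\bv}\otimes\bv)\in L^2(L^2)$ for the local energy equality and $\overline{\bv}\in L^\infty$ for the uniqueness and time-derivative estimates, since without the two-derivative gain the nonlinearity would be only as regular as in the Navier--Stokes case and neither uniqueness nor \eqref{local alpha} would follow.
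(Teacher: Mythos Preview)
Your argument is correct, and for uniqueness and the local energy equality it coincides with the paper's reasoning almost verbatim (test the difference with $\bw$, use the two-derivative gain $\|\overline{\bw}\|_{2,2}\le C\|\bw\|_2$ from Lemma~\ref{lem:st}, close via Gronwall; then test \eqref{weak1} with $\phi\bv$ and integrate by parts). Where you genuinely diverge from the paper is in the \emph{existence} step. You run a direct Galerkin scheme on a Stokes eigenbasis, derive the energy bound by testing with $\bv^N$, and pass to the limit via Aubin--Lions and the continuity of the filter. The paper instead sets up a Schauder fixed-point argument: it defines $M_1:\bv\mapsto\overline{\bv}$ via Corollary~\ref{cor:stokes2} and $M_2:\overline{\bv}\mapsto\bu$ as the solution of the \emph{linearised} transport--diffusion problem with given $\overline{\bv}$, then finds exponents (essentially $L^5(0,T;L^s)$ with $s\in(6/5,3/2)$) in which $M=M_2\circ M_1$ is continuous, compact, and maps a ball to itself. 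Your route is more elementary and exploits a pleasant accident you did not mention explicitly: since the filter is $(\alpha^2 B+I)^{-1}$ for exactly the same bilinear form $B$ whose eigenfunctions you choose as Galerkin basis, $\overline{\bv^N}$ stays in the $N$-dimensional span, so the ODE system is genuinely finite-dimensional and the cancellation $(\overline{\bv^N}\otimes\bv^N,\nabla\bv^N)=0$ holds at the Galerkin level without projection errors. The paper's Schauder approach, on the other hand, is more modular---it cleanly separates the linear solvability (Lemma~\ref{lem:evst}) from the nonlinear coupling---and does not rely on any compatibility between basis and filter; this would matter if one wanted to treat variants where the smoothing operator and the dissipation do not share eigenfunctions. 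One minor overstatement in your write-up: for \eqref{local alpha} you do not actually need the equation to hold in $L^2(0,T;L^2)$ (you only have $\bv_{,t}\in L^2(0,T;W^{-1,2}_{\bn})$); it suffices, as the paper notes, that $\bv\phi\in L^2(0,T;W^{1,2}_{\bn})$ is an admissible test function in \eqref{weak1} and that $\overline{\bv}\otimes\bv\in L^2(Q)$, which your filter estimates already give.
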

In the next theorem we focus our attention to the regularity of the unique weak solution of \eqref{BM}-\eqref{bc123}. First, we define the spaces of initial conditions. We follow \cite{Ste2006}. We set for $q\geq2$
$$
\DD_q:=
\{\varphi\in B^{2(1-\frac1q)}_{q,q}\cap L^q_{\bn,\diver}: \mbox{\eqref{bc1} holds if $q>3$}\}.
$$
Here the spaces $B^\alpha_{p,p}$ are the standard Besov spaces, see \cite[Section~2.2]{Ste2006}. Note that $\DD_2=W^{1,2}_{\bn,\diver}$.

Now we can formulate the maximal regularity result
\begin{Theorem}\label{lem:reg1}
%\marginpar{It is enough to assume $q>10/7$.}
 Assume %$\Omega\in C^{\infty}$, 
$q\geq2$, $q\neq3$, $\bef \in L^{q}(0,T;L^q_{\bn,\diver})$ and ${\textbf{v}}^{}_{0} \in \DD_q$. Then the unique weak solution of the problem  \nsa\  with initial boundary condition \eqref{ID} and boundary condition \eqref{bc1}, \eqref{bc123}  is regular, i.e. ${{\bv}}^{} \in L^{q}(0,T;W^{2,q}_{\bn,\diver}),$ $ \displaystyle  {\bv}_{,t} \in L^{q}(0,T;L^q_{\bn,\diver})$ and $p^{} \in  L^{q}(0,T;W^{1,q}_{})$.
\end{Theorem}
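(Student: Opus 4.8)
The plan is to establish maximal $L^q$-regularity by bootstrapping from the already-available weak solution of Theorem~\ref{thm:main1}, treating \nsa\ as a perturbation of the Stokes system with Navier boundary condition. First I would fix the linear part: consider the Stokes operator $A_q$ on $L^q_{\bn,\diver}$ associated with $-2\nu\diver\bD(\cdot)$ together with the boundary conditions \eqref{bc1}, and invoke the known maximal regularity theory (as in \cite{Ste2006}) to conclude that, for $\bg\in L^q(0,T;L^q_{\bn,\diver})$ and $\bv_0\in\DD_q$, the problem $\bv_{,t}+A_q\bv=\bg$, $\bv(0)=\bv_0$ has a unique solution with $\bv\in L^q(0,T;W^{2,q}_{\bn,\diver})$, $\bv_{,t}\in L^q(0,T;L^q_{\bn,\diver})$, and the pressure recovered in $L^q(0,T;W^{1,q})$ via the Helmholtz/Stokes decomposition. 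This is the engine; the spaces $\DD_q$ were defined precisely so that this holds, with the trace condition \eqref{bc1} meaningful only when $q>3$.

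Next I would set $\bg := \bef - \diver(\overline{\bv}\otimes\bv) = \bef - (\overline{\bv}\cdot\nabla)\bv$ (using $\diver\overline{\bv}=0$) and run a bootstrap starting from the regularity already known: $\bv\in L^\infty(0,T;L^2)\cap L^2(0,T;W^{1,2})$ and, crucially, via Lemma~\ref{lem:st}/Corollary~\ref{cor:stokes2}, the filtered field satisfies $\overline{\bv}\in L^2(0,T;W^{2,2})\hookrightarrow L^2(0,T;L^\infty)$ and $\overline{\bv}\in L^\infty(0,T;W^{1,2})$ — in fact the $\alpha$-Stokes smoothing gains two derivatives on $\overline{\bv}$ relative to $\bv$. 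The key point is that the convective term is \emph{much better behaved} than in Navier--Stokes: the transporting velocity $\overline{\bv}$ is smoother than the transported $\bv$, so $\|(\overline{\bv}\cdot\nabla)\bv\|_q \le \|\overline{\bv}\|_\infty\|\nabla\bv\|_q$ (or Hölder variants). Using the smoothing estimate $\|\overline{\bv}\|_{W^{2,q}}\le C\|\bv\|_{L^q}$ together with Sobolev embedding gives $\overline{\bv}\in L^\infty_{t,x}$ type bounds once $\bv\in L^q(0,T;L^q)$ with $q$ large enough, and for moderate $q$ one interpolates. Concretely: from $\bv\in L^2(0,T;W^{1,2})$ and $\overline{\bv}\in L^2(0,T;L^\infty)$ one gets $\bg\in L^1(0,T;L^2)+\ldots$; a first application of linear regularity with a time-restriction argument pushes $\bv$ into a better parabolic class, then one iterates, raising the integrability exponent at each step until reaching the target $q$. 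Uniqueness is inherited from Theorem~\ref{thm:main1}, so only the regularity gain needs to be proved, and one may freely localize in time on small intervals where norms are controlled, then patch.

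The main obstacle is organizing the bootstrap cleanly across the threshold $q=3$ (hence its exclusion) and handling the low end $q=2$, where one must first upgrade the weak solution using parabolic interpolation (Ladyzhenskaya-type inequalities adapted to the slip boundary condition) before the $W^{2,q}$ machinery applies; one needs the convective term to land in $L^{q}(0,T;L^q)$ with the \emph{same} $q$ as the output, which requires an estimate of the form $\|(\overline{\bv}\cdot\nabla)\bv\|_{L^q_{t,x}}\le \varepsilon\|\bv\|_{L^q(W^{2,q})}+C_\varepsilon(\text{lower order})$ on a short interval — achievable because $\overline{\bv}$ is two derivatives smoother, so $\|\overline{\bv}\|_{L^\infty(0,\delta;L^\infty)}$ is small-in-$\delta$ or controlled, making the nonlinearity subordinate to the leading term. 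A second, more technical point is the compatibility/trace condition encoded in $\DD_q$: one must check that $\diver(\overline{\bv}\otimes\bv)$ and $\bef$ respect the boundary structure so that the fixed-point/bootstrap stays within the domain of $A_q$; this is where following \cite{Ste2006} on the precise definition of $\DD_q$ and the Navier-slip Stokes operator does the real work. Once these are in place, standard parabolic maximal regularity plus the stated smoothing properties of the $\alpha$-filter close the argument, and the pressure regularity $p\in L^q(0,T;W^{1,q})$ follows from applying $\diver$ to \eqref{BLM} and the $W^{1,q}$-elliptic estimate for $\nabla p = -\diver(\overline{\bv}\otimes\bv)+\bef - \bv_{,t}+2\nu\diver\bD(\bv)$ projected onto the gradient part.
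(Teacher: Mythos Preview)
Your overall strategy --- treat the convective term as a forcing for the Stokes system with Navier boundary condition and bootstrap using maximal $L^q$ regularity --- is exactly what the paper does. However, you miss the one observation that makes the paper's bootstrap clean and makes your short-time/absorption machinery unnecessary.

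You only extract $\overline{\bv}\in L^2(0,T;W^{2,2})\hookrightarrow L^2(0,T;L^\infty)$ from the weak solution, which leaves you with $(\overline{\bv}\cdot\nabla)\bv\in L^1(0,T;L^2)$ and forces you into vague ``time-restriction'' and ``$\varepsilon$-absorption'' arguments. The paper instead uses the $L^\infty$-in-time information: from $\bv\in L^\infty(0,T;L^2)$ and Corollary~\ref{cor:stokes2} one gets $\overline{\bv}\in L^\infty(0,T;W^{2,2})$, hence $\overline{\bv}\in L^\infty(Q)$ immediately in three dimensions. Once $\overline{\bv}\in L^\infty(Q)$, the convective term $[\nabla\bv]\overline{\bv}$ has \emph{exactly} the integrability of $\nabla\bv$, so the first application of Lemma~\ref{lem:evst} already gives $\bv\in L^2(0,T;W^{2,2})\cap W^{1,2}(0,T;L^2)$ with no smallness assumption. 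The iteration is then purely algebraic: if $\nabla\bv\in L^r(Q)$, the right-hand side is in $L^r(Q)$, Lemma~\ref{lem:evst} yields $\bv\in L^r(0,T;W^{2,r})\cap W^{1,r}(0,T;L^r)$, and the interpolation Lemma~\ref{lem:int1} gives $\nabla\bv\in L^{s(r)}(Q)$ with $s(r)=r+r^2/(3+r)>r$. Finitely many iterations reach any prescribed $q$.

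Two smaller points. First, the exclusion $q\neq3$ is not a bootstrap-threshold issue as you suggest; it is inherited from the linear maximal-regularity result (Lemma~\ref{lem:evst}, \cite{Shi2007}), and the iteration simply avoids applying that lemma at $r=3$. Second, your worry about whether $\diver(\overline{\bv}\otimes\bv)$ ``respects the boundary structure'' is moot: one only needs the right-hand side in $L^q(0,T;L^q)$, and the Helmholtz projection onto $L^q_{\bn,\diver}$ takes care of the rest; no extra compatibility on the nonlinearity is required.
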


Further we are interested in behavior of the unique weak solution to \eqref{BM}-\eqref{bc123} if 
$\alpha\to0+$, see Theorem~\ref{thm:COVVERGENCE}, $\lambda\to1-$, see Theorem~\ref{thm:llimit}, or
$\lambda\to1-$ and $\alpha\to0+$ simultaneously in Theorem~\ref{thm:last}.
%or $\nu\to0+$, see Theorem~\ref{thm:nlimit}.\\

Leray \cite{Leray34} was the first one who  regularized the Navier Stokes equations by smoothing  the convective velocity with regularization made by convolution.
The $\alpha$~models are based on a smoothing obtained
with the application of the inverse of the Helmholtz operator $ I - \alpha^2 \Delta.$ 
There exists a large family of the $\alpha$ models, see for example \cite{dunca06, LL03, CLT06, CHOT05,FDT02,FhT01, ILT05,A01}. 

One of the first $\alpha$ model is the Lagrangian averaged Navier Stokes equations (LANS-$\alpha$) \cite{CFHOTW99b} that was introduced as a sub-grid scale turbulence model. In \cite{FDT02} the authors suggest the LANS-$\alpha$ as a closure model for the Reynolds averaged equations.
The Leray-$\alpha$ model \cite{CHOT05}, as the other family of the $\alpha$ models, enjoys the same results of existence and uniqueness of the solutions and  was also used as a closure models for the Reynolds averaged equations.
The Leray-$\alpha$ was tested numerically in \cite{CHOT05,GH05}. In the numerical simulation the authors showed that the large scales of motion bigger than $\alpha$ in flow are captured.  It was shown also that for scales of motion smaller than $\alpha$, the energy spectra decays faster in comparison to that of Navier Stokes equations.
In \cite{CHOT05}, the convergence of a weak solution of the Leray-$\alpha$ to a weak solution of the Navier-Stokes equations as $\alpha \rightarrow 0$ was established.
% under periodic boundary conditions. We mention also that under the same periodic boundary conditions 
It is shown in \cite{A01} that the Leray-$\alpha$ equations give rise to a suitable weak solution to the Navier-Stokes equations. All previously mentioned results were derived under the periodic boundary conditions.
%The inviscid limit of the  simplified Bardina model was studied in \cite{CLT06} where the authors showed the global existence and uniqueness of weak
%solutions to the inviscid model. This result has an important application in computational fluid
%dynamics when the inviscid simplified Bardina model is considered as a regularizing model of the
%three-dimensional Euler equations.   
 %\marginpar{Here should be added what is already known, could you do it Hani? ok}
%These results are established for the spatially periodic problem in \cite{CHOT05, A01}. 

 There is only a few studies on the $\alpha$ models on bounded domains. The global existence and the uniqueness of weak solutions to the LANS-$\alpha$ 
 on bounded domain with no-slip boundary condition is given in \cite{Coutand}. 
The fact that we are able to
establish such results of existence, uniqueness and convergence with Navier slip boundary conditions to the \nsa\ model  is a  novel feature of the the present
study. 
%Let us mention also that  following the method of the authors in \cite{BuMaRa09, BuFeMa09} we  use the  Navier-boundary conditions in order to deduce that the pressure term $p$ of the Navier-Stokes equations  is in $L^{5/3}(0,T; L^{5/3}) $  in time and space. This is the reason why we can not  extend  the method  also for Dirichlet boundary condition for the velocity.  
The use of the $\alpha$-equations as a model of turbulent flows in more complicated geometries 
remains to be studied.
%The other $\alpha$  models  studied before with periodic boundary conditions in particular \cite{CLT06,LL03}   will be reported with Navier boundary condition in a
%forthcoming paper. 
%The numerical study of the $\alpha$ models with the Navier boundary conditions will be reported in a forthcoming paper.

 Finally, one may ask questions about other closure model of turbulence on  bounded domains with usual boundary conditions,
such as the Navier slip conditions.  
 This is an crucial  problem, because  
the filter in this case does not  commutes with the differential operators \cite{berselli, galdilayton, LL03, dunca06}.

The paper is organized as follows. In Sect. 2 we introduce  relevant function spaces and we recall  some preliminary results concerning solutions of elliptic equations with Navier boundary conditions. 
%Then we introduce the main result that includes the precise definition of the
%weak solution to \eqref{BM}--\eqref{TKE}.
 Then, in Sect. 3, inspired by result in \cite{BuFeMa09}, we give the proofs of Theorems  \ref{thm:main1} and \ref{lem:reg1}. 
 In Sect. 4 we
concentrate on analysis of the behavior of the solutions $
(\bv^{\alpha}, p^{\alpha})$;  as $\alpha \to 0{+}$, where we show that the $\alpha$ regularization gives rise to a suitable weak solution to the Navier-Stokes equations.
 In Sect. 5 we  take care of the dependence of the solution of the parameter $\lambda $ in order to pass to the limit as $\lambda\to1-$ and in the last section we pass to the limit as $\alpha \to 0{+}$ and $\lambda\to1-$ simultaneously.

\section{Notation and auxiliary results}

\subsection{Notation}
We use standard notation for Lebesgue, Sobolev and Besov spaces on a domain $O$ and their norms, e.g. $L^2(O)$, $W^{1,2}(O)$, $B^{1}_{2,2}(O)$ ($=W^{1,2}(O)$ if $O$ is smooth). If $O=\Omega$ we drop $(\Omega)$, e.g. $L^{5/2}$. By $(\cdot,\cdot)_O$ we denote the inner product in $L^2(O)$, $\langle\cdot,\cdot\rangle$ stand for a duality pairing. We do not distinguish the scalar and vectorial spaces. The correct meaning is always clear from the context. Next we define relevant function spaces for the velocity field. Let $k\in\mathbb N$, $p,q\geq 1$, then
\begin{align*}
W^{k,p}_{\bn}&:=\left\{ \bv \in W^{k,p}: \; \bv \cdot \bn
=0 \textrm{ on } \partial \Omega \right\},\\
W^{k,p}_{\bn,\diver}&:=\left\{ \bv \in W^{k,p}_{\bn}: \; \diver \bv
=0 \textrm{ in }  \Omega \right\},\\
W^{-k,p'}_{\bn}&:=\left(W^{k,p}_{\bn} \right)^{*}, \quad
W^{-k,p'}_{\bn,\diver}:=\left(W^{k,p}_{\bn,\diver} \right)^{*},\\
L^q_{\bn,\diver}&:= \overline{W^{1,q}_{\bn,\diver}}^{\|\, \|_{q}}.
\end{align*}

\subsection{Stokes problem}
In this subsection we collect some known results concerning properties of solutions to Stokes problem with Navier boundary condition \eqref{bc1}.

Let us first consider the stationary Stokes problem for some fixed function $\bv$. 

\begin{align}
\label{s1} -\alpha^2 \diver \bD( \overline{\bv}) +   \overline{\bv} + \nabla \pi =\bv,\quad \diver \overline{\bv}=0\quad \textrm{ on } \Omega,\\
\label{s2}\overline{\bv} \cdot \bn =0, \quad
\lambda {\overline{\bv}}_{\btau} + (1-\lambda)(\bD( \overline{\bv})
\bn)_{\btau}=0 \quad \textrm{ on } \partial \Omega, \\
\label{s3}
\int_\Omega\pi d\bx=0.
%&\lambda \overline{\bv}_{\btau} + (1-\lambda)\nu(\bD( \overline{\bv})
%\bn)_{\btau}=0 \quad &&\textrm{ on } \partial \Omega.
\end{align}

We have the following lemma about existence and regularity of solutions.

\begin{Lem}\label{lem:st}
 Assume that %$\Omega\in C^{\infty}$, 
$\alpha_0>0$, $\alpha\in(0,\alpha_0)$, $q>1$, $\bv \in L^{q}$. Then the unique solution  $(\overline{\bv},\pi)$  of system (\ref{s1})-(\ref{s3}) is in $W^{{2},q}_{\bn,\diver}\times W^{{1},q}$ and satisfies the estimates
$$
\|\overline{\bv} \|_{2,q} +\|{\pi} \|_{1,q}   \le C(\alpha) \|\bv\|_{q},\quad
\|\overline{\bv} \|_{q} \le C(\alpha_0)\|\bv\|_{q}.
$$
The constant $C(\alpha)>0$ may depend (and in a fact depends on $\alpha$) while $C(\alpha_0)>0$ may depend on $\alpha$ only through $\alpha_0$.

If moreover $k\in\mathbb{N}$, $k>1$ and $\bv\in W^{k,q}$. Then $(\overline{\bv},\pi)\in W^{{k+2},q}_{\bn,\diver}\times W^{{k+1},q}$ and the following estimate hold
$$
\|\overline{\bv} \|_{k+2,q} +\|{\pi} \|_{k+1,q}   \le C(\alpha)( \|\bff\|_{k,q} + \|\overline{\bv} \|_{q} +\|{\pi} \|_{q}).
$$
 \end{Lem}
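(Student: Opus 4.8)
The plan is to establish Lemma~\ref{lem:st} by combining the standard $L^2$ theory for the Stokes-type system \eqref{s1}--\eqref{s3} with a bootstrap argument for higher integrability and higher differentiability, together with a careful treatment of the Navier boundary condition \eqref{s2}. First I would set up the weak formulation: multiply \eqref{s1} by a test function $\bw\in W^{1,2}_{\bn}$, integrate by parts, and use \eqref{s2} to convert the boundary term $(\bD(\overline{\bv})\bn)_{\btau}$ into the Robin-type contribution $\frac{\lambda}{1-\lambda}(\overline{\bv},\bw)_{\partial\Omega}$, exactly as in \eqref{weak1}. This gives the bilinear form
$$
a(\overline{\bv},\bw)=\alpha^2\Bigl(2(\bD(\overline{\bv}),\bD(\bw)) + \tfrac{2\lambda}{1-\lambda}(\overline{\bv},\bw)_{\partial\Omega}\Bigr) + (\overline{\bv},\bw),
$$
which, by Korn's inequality on $W^{1,2}_{\bn}$ (valid since $\lambda<1$, so the boundary term has the right sign), is coercive on $W^{1,2}_{\bn,\diver}$. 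Lax--Milgram then yields a unique $\overline{\bv}\in W^{1,2}_{\bn,\diver}$ for any $\bv\in L^2$ (for general $q$ one first treats $q\geq 2$ and then duality for $1<q<2$), and the pressure $\pi$ with $\int_\Omega\pi\,d\bx=0$ is recovered by the de~Rham / Ne\v{c}as argument. The bound $\|\overline{\bv}\|_q\leq C(\alpha_0)\|\bv\|_q$ with $\alpha_0$-only dependence comes from testing with $\bw=\overline{\bv}$ and discarding the nonnegative $\alpha^2$-terms, giving $\|\overline{\bv}\|_2^2\leq(\bv,\overline{\bv})\leq\|\bv\|_2\|\overline{\bv}\|_2$; the $L^q$ version follows from the same monotone structure once the $W^{2,q}$ theory below is in place.

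Next I would upgrade to $W^{2,q}\times W^{1,q}$ regularity. The system \eqref{s1} is, after absorbing the zeroth-order term, a generalized Stokes system $-\alpha^2\mu\Delta\overline{\bv}+\nabla\pi = \bv-\overline{\bv}$, $\diver\overline{\bv}=0$ (using $\diver\bD(\overline{\bv})=\tfrac12\Delta\overline{\bv}$ for divergence-free fields), with the Navier boundary condition \eqref{s2}. One invokes the classical $L^q$-regularity theory for the Stokes operator with this boundary condition on smooth bounded domains (Agmon--Douglis--Nirenberg-type estimates, or the results quoted from \cite{Ste2006}), which gives $\|\overline{\bv}\|_{2,q}+\|\pi\|_{1,q}\leq C(\alpha)(\|\bv-\overline{\bv}\|_q+\|\overline{\bv}\|_q)$; since $\|\overline{\bv}\|_q\leq C(\alpha_0)\|\bv\|_q$ is already known, the right-hand side is controlled by $C(\alpha)\|\bv\|_q$. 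The higher-order statement for $\bv\in W^{k,q}$ is a straightforward induction: differentiating the equation tangentially and using the smoothness of $\partial\Omega$ to handle the normal derivatives via the equation itself, one gains two derivatives at each step, yielding $(\overline{\bv},\pi)\in W^{k+2,q}\times W^{k+1,q}$ with the stated estimate (note the right-hand side there should read $\|\bv\|_{k,q}$ rather than $\|\bff\|_{k,q}$, an apparent typo).

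The main obstacle I anticipate is the careful bookkeeping of the $\alpha$-dependence of the constants: one must show the coercivity constant of $a(\cdot,\cdot)$ degenerates like $\min(1,\alpha^2)$ but the crucial low-order bound $\|\overline{\bv}\|_q\leq C\|\bv\|_q$ holds with a constant depending on $\alpha$ only through the upper bound $\alpha_0$ — this is what makes the passage $\alpha\to0+$ in later sections possible. This requires separating the monotone zeroth-order part (which gives the $\alpha$-uniform $L^q$ bound directly from the maximum-principle / testing structure, independently of the elliptic regularity constant) from the smoothing part (whose constant is allowed to blow up as $\alpha\to0$). A secondary technical point is verifying that the Navier slip condition \eqref{s2} is among the boundary conditions covered by the cited $L^q$ Stokes theory and that the compatibility condition $\overline{\bv}\cdot\bn=0$ together with $\diver\overline{\bv}=0$ is consistent; on a $C^\infty$ domain this is standard but must be stated. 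Everything else — existence and uniqueness via Lax--Milgram, pressure recovery, the bootstrap — is routine.
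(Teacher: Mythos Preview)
Your approach differs substantially from the paper's, which is simply two citations: the first part (existence, $W^{2,q}$ regularity, and both estimates) is attributed to Shibata--Shimada's resolvent estimate for the Stokes system with Robin boundary condition \cite[Theorem~1.3]{ShiShi2010}, and the higher-order statement is attributed to Agmon--Douglis--Nirenberg \cite{MR0162050}, after observing that the Stokes operator is elliptic in their sense and the Navier condition is complementary. Your hands-on programme (Lax--Milgram, de~Rham pressure recovery, ADN bootstrap) is largely sound and more informative than a bare citation, and for the $L^2$ case and the higher-order step it would go through.

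There is, however, a genuine gap in your argument for the $\alpha$-uniform bound $\|\overline{\bv}\|_q\le C(\alpha_0)\|\bv\|_q$ when $q\neq 2$. The ``monotone / maximum-principle / testing'' mechanism you invoke works for scalar equations of the form $-\alpha^2\Delta u+u=f$ by testing with $|u|^{q-2}u$, but for the Stokes system the pressure term $(\nabla\pi,|\overline{\bv}|^{q-2}\overline{\bv})$ does not vanish, since $|\overline{\bv}|^{q-2}\overline{\bv}$ is not divergence-free. So the nonlinear test-function trick breaks down, and there is no maximum principle for Stokes. What actually delivers the $\alpha$-uniform $L^q$ bound is precisely the resolvent estimate in \cite{ShiShi2010}: writing the problem as $(\alpha^{-2}I+A)\overline{\bv}=\alpha^{-2}P\bv$ with $A$ the Stokes operator under Navier conditions and $P$ the Helmholtz projection, the estimate $\|\lambda(\lambda I+A)^{-1}\|_{L^q\to L^q}\le C$ for $\lambda$ in a sector (applied with $\lambda=\alpha^{-2}\ge\alpha_0^{-2}$) gives $\|\overline{\bv}\|_q\le C\|P\bv\|_q\le C\|\bv\|_q$ with $C$ independent of $\alpha\in(0,\alpha_0)$. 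This is the step where the paper's citation carries real content that your outline does not supply; since the bound is used crucially later (e.g.\ \eqref{vao2} with $q=10/3$), you should either invoke the resolvent estimate directly or give an alternative $L^q$ argument that genuinely handles the pressure.
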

% \marginpar{here we should add something about the independence of $\lambda$ and the role of $\alpha$}
\textbf{Proof.}
The first part of the lemma is proved in \cite[Theorem~1.3, (1)]{ShiShi2010}. The second part of the theorem follows from the result \cite[Theorem~10.5]{MR0162050}, since the Stokes operator satisfies the ellipticity condition \cite[Section I.1]{MR0162050} and Navier boundary condition is a complementary one, see \cite[Section~I.2]{MR0162050}.
\qed 

%\begin{Lem}
%Assume that $
%\bv \in 
%W^{m,2}_{\bn,\diver}
%$. Then the unique solution  $(\overline{\bv},\pi)$  of system (\ref{s1})-(\ref{s3}) is in $W^{{m+2},2}_{\bn,\diver}\times W^{{m+1},2}_{}$.
%Such that 
%$$\|\overline{\bv} \|_{m+2,2}^2 +\|{\pi} \|_{m+1,2}^2   \le C(\alpha)$$ 
%where $C(\alpha):=C() $  
%\end{Lem}

\begin{Col}\label{cor:stokes2}
Let $k\in\mathbb{N}\cup\{0\}$, $r\in[1,+\infty)$, $q>1$. Assume that $\bv \in L^{r}(0,T;W^{k,q})$. Then the unique solution  $(\overline{\bv},\pi)$  of problem \eqref{TKE} with boundary conditions \eqref{bc123} and \eqref{s3} satisfies  $\overline{\bv}\in L^{r}(0,T;W^{{k+2},q}_{\bn,\diver})$, $\pi\in L^{r}(0,T;W^{{k+1},q}_{\bn,\diver})$.
\end{Col}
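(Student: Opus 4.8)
The plan is to deduce Corollary~\ref{cor:stokes2} from Lemma~\ref{lem:st} by applying the stationary estimate at a.e. time slice and then integrating in time. For a.e. fixed $t\in(0,T)$ we have $\bv(t)\in W^{k,q}$, so we may invoke Lemma~\ref{lem:st} (with the fixed right-hand side $\bv(t)$) to obtain a unique $(\overline{\bv}(t),\pi(t))\in W^{k+2,q}_{\bn,\diver}\times W^{k+1,q}$. This already gives the pointwise-in-time solvability and uniqueness; what remains is the time integrability.

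The key step is to turn the two estimates of Lemma~\ref{lem:st} into a single estimate whose right-hand side is controlled by $\|\bv(t)\|_{k,q}$ alone. For $k=0$ the first estimate of Lemma~\ref{lem:st} directly yields $\|\overline{\bv}(t)\|_{2,q}+\|\pi(t)\|_{1,q}\le C(\alpha)\|\bv(t)\|_q$. For $k\geq1$ I would argue by induction on $k$: the second estimate of Lemma~\ref{lem:st} bounds $\|\overline{\bv}(t)\|_{k+2,q}+\|\pi(t)\|_{k+1,q}$ by $C(\alpha)(\|\bv(t)\|_{k,q}+\|\overline{\bv}(t)\|_q+\|\pi(t)\|_q)$, and the lower-order terms $\|\overline{\bv}(t)\|_q$ and $\|\pi(t)\|_q$ are in turn controlled by $C(\alpha)\|\bv(t)\|_q\le C(\alpha)\|\bv(t)\|_{k,q}$ via the $k=0$ case. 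Hence for every $k\in\mathbb N\cup\{0\}$ there is a constant $C=C(\alpha,k,q,\Omega)$ with
\begin{equation*}
\|\overline{\bv}(t)\|_{k+2,q}+\|\pi(t)\|_{k+1,q}\le C\,\|\bv(t)\|_{k,q}\qquad\text{for a.e. }t\in(0,T).
\end{equation*}

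Raising this inequality to the power $r$ and integrating over $(0,T)$ gives
\begin{equation*}
\int_0^T\big(\|\overline{\bv}(t)\|_{k+2,q}+\|\pi(t)\|_{k+1,q}\big)^r\,dt\le C^r\int_0^T\|\bv(t)\|_{k,q}^r\,dt=C^r\|\bv\|_{L^r(0,T;W^{k,q})}^r<\infty,
\end{equation*}
which is exactly the claim $\overline{\bv}\in L^r(0,T;W^{k+2,q}_{\bn,\diver})$ and $\pi\in L^r(0,T;W^{k+1,q}_{\bn,\diver})$. Measurability of $t\mapsto\overline{\bv}(t)$ and $t\mapsto\pi(t)$ as Banach-space valued maps follows from uniqueness of the Stokes solution together with measurability of $t\mapsto\bv(t)$, since the solution operator is linear and bounded. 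The only mild obstacle is the bookkeeping of the lower-order terms in the higher-regularity estimate of Lemma~\ref{lem:st}; once one observes that they are absorbed by the $k=0$ estimate applied to the same right-hand side, the rest is a routine application of Fubini/Tonelli.
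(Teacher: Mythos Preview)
Your argument is exactly the derivation the paper leaves implicit: the corollary is stated immediately after Lemma~\ref{lem:st} with no separate proof, and the intended reasoning is precisely to apply the stationary estimate at a.e.\ time and integrate the $r$th power. One minor bookkeeping point: the higher-regularity part of Lemma~\ref{lem:st} is formulated only for $k>1$, so your induction step ``for $k\geq1$ use the second estimate'' has a nominal gap at $k=1$; this is harmless because the Agmon--Douglis--Nirenberg result invoked in the proof of Lemma~\ref{lem:st} applies for $k=1$ as well, but you should flag it.
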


Now we turn our attention to the evolutionary variant of the problem \eqref{s1}.
\begin{equation}
\diver \bv =0,\quad \bv_{,t} -  2\nu \diver \bD( \bv)  = -\nabla p + \bef.\label{es1}
\end{equation}
\begin{Lem}\label{lem:evst}
Let $2\leq q<+\infty$, $q\neq 3$. If $\bv_0\in \DD_q$ and $\bef\in L^q(0,T;L^q)$ then problem \eqref{es1} with \eqref{psp1}, boundary condition \eqref{bc1} and initial condition \eqref{ID} admits a unique solution $(\bv,p)$ such that 
\begin{align*}
\bv\in L^q(0,T;W^{2,q})\cap W^{1,q}(0,T;L^q),\quad
p\in  L^q(0,T;W^{1,q}).
\end{align*}
\end{Lem}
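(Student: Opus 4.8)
The plan is to reduce Lemma~\ref{lem:evst} to the well-known maximal regularity theory for the Stokes operator under Navier boundary conditions. First I would rewrite the Navier slip condition \eqref{bc1} in the form $\bv\cdot\bn=0$, $(\bD(\bv)\bn)_{\btau}+\beta\,\bv_{\btau}=0$ with $\beta:=\frac{\lambda}{1-\lambda}\ge0$ (here we use $\lambda\in[0,1)$), so that it becomes a genuine Robin-type condition for the symmetric gradient. Then I would invoke the fact that the Stokes operator $A_q\bv:=-2\nu\,\mathbb P_q\,\diver\bD(\bv)$, with domain the $W^{2,q}_{\bn,\diver}$ functions satisfying this boundary condition, generates a bounded analytic semigroup on $L^q_{\bn,\diver}$ for every $q\in(1,\infty)$, and moreover has the property of maximal $L^q$-regularity; this is exactly the content of the references already cited in the excerpt (\cite{ShiShi2010} for the resolvent estimate / sectoriality, and \cite{MR0162050} for the fact that the boundary value problem is elliptic with a complementing boundary condition, cf. the proof of Lemma~\ref{lem:st}). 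One must note the mild subtlety that $A_q$ may have a nontrivial kernel only in the pure-slip case $\lambda=0$ on axisymmetric domains; since we only need existence and uniqueness on a finite time interval, this causes no trouble because the semigroup is still analytic and $L^q$-maximal regular on all of $L^q_{\bn,\diver}$.

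The key steps, in order, would be: (i) Apply the Helmholtz (Leray) projection $\mathbb P_q$ onto $L^q_{\bn,\diver}$ to \eqref{es1}, absorbing $-\nabla p$, to obtain the abstract Cauchy problem $\bv_{,t}+A_q\bv=\mathbb P_q\bef$, $\bv(0)=\bv_0$. (ii) Recall that $\DD_q$ is precisely (up to equivalence of norms) the real interpolation space $(L^q_{\bn,\diver},D(A_q))_{1-1/q,q}$, i.e. the correct trace space for maximal $L^q$-regularity; this is why $\DD_q$ was defined via the Besov space $B^{2(1-1/q)}_{q,q}$ together with the boundary condition \eqref{bc1} imposed only when $q>3$ (for $q<3$ the boundary trace of a $B^{2(1-1/q)}_{q,q}$ function is too rough for \eqref{bc1} to be meaningful, which is exactly the reason $q=3$ is excluded), following \cite{Ste2006}. (iii) Invoke the maximal $L^q$-regularity theorem (e.g. the Dore--Venni / Weis $\mathcal R$-boundedness machinery, or directly the statement in \cite{Ste2006}) to conclude that there is a unique $\bv\in L^q(0,T;D(A_q))\cap W^{1,q}(0,T;L^q_{\bn,\diver})$, which gives $\bv\in L^q(0,T;W^{2,q})\cap W^{1,q}(0,T;L^q)$ since $D(A_q)\hookrightarrow W^{2,q}$ by the elliptic regularity of Lemma~\ref{lem:st}. (iv) Recover the pressure: since $\diver\bv=0$ and $\bv_{,t}-2\nu\diver\bD(\bv)-\bef\in L^q(0,T;L^q)$ is in the kernel of $\mathbb P_q$, it equals $\nabla p$ for a unique (up to the normalization \eqref{psp1}) $p$ with $\nabla p\in L^q(0,T;L^q)$, hence $p\in L^q(0,T;W^{1,q})$; uniqueness of $(\bv,p)$ follows from uniqueness of the abstract solution plus the normalization.

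The main obstacle I expect is not in the abstract functional-analytic scheme — once sectoriality and $\mathcal R$-sectoriality of $A_q$ are granted, maximal regularity is automatic — but rather in \emph{correctly identifying the interpolation/trace space} $\DD_q$ and verifying that the boundary condition behaves as stated across the threshold $q=3$. One has to check carefully that $D(A_q)=\{\bv\in W^{2,q}_{\bn,\diver}: (\bD(\bv)\bn)_{\btau}+\beta\bv_{\btau}=0\text{ on }\partial\Omega\}$ and that its real interpolation with $L^q_{\bn,\diver}$ at exponent $1-1/q$ coincides with $\DD_q$; the delicate point is that the boundary condition survives interpolation only when $q>3$ (so the trace of order $2(1-1/q)-1/q=2-3/q$ of $\bD(\bv)$ is well-defined), which is precisely why the definition of $\DD_q$ distinguishes $q>3$ and why $q=3$ is forbidden. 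This identification is, however, exactly what \cite[Section~2.2]{Ste2006} provides, so the proof essentially amounts to citing Lemma~\ref{lem:st} for elliptic regularity, \cite{ShiShi2010} for generation of the analytic semigroup with $\mathcal R$-bounded resolvent, and \cite{Ste2006} for the maximal-regularity statement with the trace space $\DD_q$, and assembling these into the stated conclusion.
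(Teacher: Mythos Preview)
Your proposal is correct and is essentially the approach behind the paper's proof, which consists of a single citation: the lemma is exactly \cite[Theorem~1.2]{Shi2007} (Shimada's maximal $L^p$--$L^q$ regularity for the Stokes system with Robin boundary condition). What you have written is an accurate outline of the machinery underlying that reference---rewriting \eqref{bc1} as a Robin condition with $\beta=\lambda/(1-\lambda)$, applying the Helmholtz projection, invoking $\mathcal R$-sectoriality of the Stokes operator (cf.\ \cite{ShiShi2010}), identifying the trace space $\DD_q=(L^q_{\bn,\diver},D(A_q))_{1-1/q,q}$ via \cite{Ste2006}, and recovering the pressure---but the paper simply cites the finished result rather than reproducing these steps. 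The one minor discrepancy is that you point to \cite{Ste2006} for the maximal-regularity statement itself, whereas the precise reference used here is \cite{Shi2007}.
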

\textbf{Proof.}
This Theorem is proved in \cite[Theorem~1.2]{Shi2007}.
\qed

\subsection{Auxiliary lemma}
We  finish this section by the following interpolation lemma.
\begin{Lem}\label{lem:int1} 
Let $\Omega\subset\RR^n$ be bounded Lipschitz domain, $r>1$ and $f\in L^\infty(0,T;L^r)\cap L^r(0,T;W^{2,r})$. Then $\nabla f\in L^s(Q)$ for $s=r+r^2/(n+r)$. 
\end{Lem}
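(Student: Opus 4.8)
The plan is to prove the interpolation inequality $\|\nabla f\|_{L^s(Q)} \le C\|f\|_{L^\infty(0,T;L^r)}^{1-\theta}\|f\|_{L^r(0,T;W^{2,r})}^{\theta}$ for a suitable $\theta\in(0,1)$, which clearly implies the stated conclusion once one checks $s=r+r^2/(n+r)$. First I would recall the Gagliardo--Nirenberg inequality on the bounded Lipschitz domain $\Omega$: for a fixed time slice,
\begin{equation*}
\|\nabla f(t)\|_{L^q(\Omega)} \le C\|f(t)\|_{W^{2,r}(\Omega)}^{\theta}\|f(t)\|_{L^r(\Omega)}^{1-\theta} + C\|f(t)\|_{L^r(\Omega)},
\end{equation*}
valid whenever $\tfrac1q = \tfrac1r + \bigl(\theta\cdot\tfrac{-2}{n}\bigr)\cdot(-1)$... more precisely with the scaling relation $\frac1q - \frac1n = \theta\bigl(\frac1r - \frac2n\bigr) + (1-\theta)\frac1r$, i.e. $\frac1q = \frac1r - \frac{2\theta}{n} + \frac1n$. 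The idea is to choose $q$ so that, after raising to the power $s$ and integrating in time, the time integrability works out: we want $\int_0^T \|\nabla f(t)\|_q^s\,dt$ finite, and the natural choice is to pick $q=s$ and exploit that $\|f(t)\|_{W^{2,r}}^{\theta s}$ is integrable in time precisely when $\theta s = r$, while the factor $\|f(t)\|_{L^r}^{(1-\theta)s}$ is controlled by the $L^\infty(0,T;L^r)$ bound.

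So the second step is to solve the two scalar equations $\theta s = r$ and $\frac1s = \frac1r - \frac{2\theta}{n} + \frac1n$ simultaneously for $s$ and $\theta$. Substituting $\theta = r/s$ into the second equation gives $\frac1s = \frac1r + \frac1n - \frac{2r}{ns}$, hence $\frac{n + 2r}{ns} = \frac{n+r}{nr}$, so $s = \frac{r(n+2r)}{n+r} = r + \frac{r^2}{n+r}$, matching the claim; and then $\theta = r/s = \frac{n+r}{n+2r}\in(0,1)$ for $r>1$, $n\ge1$, so the Gagliardo--Nirenberg hypothesis $0<\theta<1$ and the admissibility condition (that $1 - \tfrac{n}{r} \le \theta(2 - \tfrac{n}{r})$, i.e. the non-endpoint case) hold. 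One should double-check $q=s\ge1$ and that the derivative order $1$ lies strictly between $0$ and $2$ so Gagliardo--Nirenberg applies without the exceptional restrictions.

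The third step is the integration: by Gagliardo--Nirenberg and then by the elementary inequality $(a+b)^s\le 2^{s-1}(a^s+b^s)$,
\begin{equation*}
\int_0^T \|\nabla f(t)\|_s^s\,dt \le C\int_0^T \|f(t)\|_{W^{2,r}}^{\theta s}\|f(t)\|_{L^r}^{(1-\theta)s}\,dt + C\int_0^T\|f(t)\|_{L^r}^s\,dt,
\end{equation*}
and since $\theta s = r$ and $\|f(t)\|_{L^r}^{(1-\theta)s}\le \|f\|_{L^\infty(0,T;L^r)}^{(1-\theta)s}$ is a finite constant, the first integral is bounded by $C\|f\|_{L^\infty(0,T;L^r)}^{(1-\theta)s}\int_0^T\|f(t)\|_{W^{2,r}}^{r}\,dt<\infty$; the second integral is finite because $T<\infty$ and $f\in L^\infty(0,T;L^r)$. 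Hence $\nabla f\in L^s(Q)$.

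The main obstacle, such as it is, is bookkeeping rather than depth: making sure the exponents in the Gagliardo--Nirenberg inequality are in the admissible range on a bounded Lipschitz (not smooth) domain — this is why the inequality carries the extra lower-order term $C\|f(t)\|_{L^r}$ and why one needs $\theta\in(0,1)$ strictly — and verifying that the particular $s$ produced is exactly $r + r^2/(n+r)$. Everything else is a routine Hölder-in-time argument, so I would keep the write-up short.
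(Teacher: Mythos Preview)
Your argument is correct and follows the same route as the paper: establish the spatial interpolation inequality $\|\nabla f\|_{s}\le C\|f\|_{r}^{1-\theta}\|f\|_{2,r}^{\theta}$ with $\theta=(n+r)/(n+2r)$, raise to the power $s$, and use $\theta s=r$ to integrate in time. The only cosmetic difference is that the paper invokes Triebel's interpolation theory to obtain the multiplicative inequality directly (no additive lower-order term), whereas you quote Gagliardo--Nirenberg on a Lipschitz domain with the extra $C\|f(t)\|_{L^r}$ and dispose of it using $T<\infty$ and $f\in L^\infty(0,T;L^r)$; both are valid, and your version is perhaps more self-contained.
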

\textbf{Proof.} First we realize that the inequality  
$$
\norm{\nabla f}_s\leq C\norm{f}_{r}^{1-\theta}\norm{f}_{2,r}^\theta,
$$
with $\theta=(n+r)/(n+2r)$ holds as a consequence of \cite[4.2.1/3]{Triebel1978}, \cite[2.4.2/11 and 4.3.2/Theorem 2]{Triebel1978}, \cite[Theorem 4.6.2a]{Triebel1978}. Taking the $s$ power of this inequality the statement of the lemma then follows since $\theta s=r$.
\qed

\section{Proof of Theorems~\ref{thm:main1} and \ref{lem:reg1}}

\subsection{Proof of Theorem \ref{TH1}}
We prove the theorem using Schauder fixed point theorem. To this end we fix $r>1$, $q>1$ (the exact values of $r$ and $q$ will be determined later) and study properties of the mapping 
$$
M_2:L^2(0,T;W^{1,2}_{\bn,\diver})\cap L^r(0,T;L^{q})\to L^2(0,T;W^{1,2}_{\bn,\diver})\cap L^\infty(0,T;L^2),\quad M_2(\overline{\bv})=\bu,
$$
where $\bu\in L^2(0,T;W^{1,2}_{\bn,\diver})\cap L^\infty(0,T;L^2)$ is the unique solution of the problem
\begin{align*}
\diver \bu =0,\quad 
\bu_{,t} + \diver (\overline{\bv} \otimes \bu) -  2\nu \diver \bD( \bu)  = -\nabla p + \bef,
\end{align*}
with an initial condition
\begin{equation*}
\bu(0,x)=\bv_0(x) \quad \textrm{ in } \Omega,
\end{equation*}
and a boundary condition
\begin{align*}
&\bu \cdot \bn =0, \quad\lambda \bu_{\btau} + (1-\lambda) ( \bD( \bu)
\bn)_{\btau}=0 \quad \textrm{ on } (0,T)\times \partial \Omega.
\end{align*}
Our first goal is to determine the constants $r$, $q$ such that the mapping $M_2$ is well defined and continuous. Since for any $\gamma\geq 2$
\begin{equation}\label{embedd}
L^\infty(0,T;L^2)\cap L^2(0,T;W^{1,2})\hookrightarrow
L^\gamma(0,T;L^{\frac{6\gamma}{3\gamma-4}})
\end{equation}
it is enough to assume for some $\gamma>2$ that 
\begin{equation}\label{ass:rq}
r\geq\frac{2\gamma}{\gamma-2},\quad q\geq\frac{3\gamma}2.
\end{equation}
Under this assumptions $|\overline \bu||\bu|\in L^2(0,T;L^2)$ and the correctness of the definition of $M_2$ and its continuity follow by standard technique. Moreover, it is also seen that there exists $C>0$ independent of $\overline{\bv}$ that
\begin{equation}
\label{ass:bdd}
\nm{\bu}_{L^\gamma(0,T;L^{\frac{6\gamma}{3\gamma-4}})}+
\nm{\bu}_{L^\infty(0,T;L^2)}+
\nm{\bu}_{L^2(0,T;W^{1,2})}
\leq C.
\end{equation}
Condition \eqref{ass:rq} also assures that 
$$
\bu_t\in L^2(0,T;\big(W^{1,2}_{n,\diver}\big)^*)
$$
and Aubin-Lions compactness lemma provides that 
\begin{equation}\label{cpm2}
M_2:L^2(0,T;W^{1,2}_{\bn,\diver})\cap L^r(0,T;L^{q})\hookrightarrow L^\gamma(0,T;L^s)
\end{equation} 
is compact for any $\gamma>2$ and $s\in(1,6\gamma/(3\gamma-4))$. Compare \eqref{embedd}.

For $s\in(1,3/2)$ we introduce a mapping
$$
M_1:L^\gamma(0,T;L^s)\hookrightarrow L^\gamma(0,T;W^{2,s}), \quad M_1(\bv)=\overline{\bv},
$$
where $\overline{\bv}$ is the unique solution of the problem \eqref{s1}-\eqref{s3}. Its existence and regularity is assured by Corollary~\ref{cor:stokes2}. Here $\gamma$, $s$, $r$ and $q$ are sought such that
\begin{equation}\label{em1}
L^\gamma(0,T;W^{2,s})\hookrightarrow L^r(0,T;L^q)\cap L^2(0,T;W^{1,2}).
\end{equation}
It is needed $\gamma\geq r$, $\gamma\geq 2$ and $3s/(3-2s)\geq q$, $3s/(3-s)\geq 2$. 

Finally we want to apply Schauder fixed point theorem to $M=M_2\circ M_1$. To this end we set $\gamma=r=q=5$. In order to have $M$ well defined we need \eqref{em1} which is verified if $s>6/5$. The compactness of $M$ follows from \eqref{cpm2} provided $s<30/11$. It is seen that we can fix $s\in(6/5,3/2)$. Altogether we got that 
$$
M:L^5(0,T;L^s)\hookrightarrow L^5(0,T;L^s)
$$
is continuous, compact mapping that maps a certain ball into itself, see \eqref{ass:bdd}. Schauder fixed point theorem gives a fixed point of $M$ which solves \eqref{BM}-\eqref{bc123} in the weak sense and satisfies \eqref{bv12}, \eqref{bvt} and \eqref{inca}. It remains to reconstruct pressure. This can be done as in \cite[Section~3.2]{BuMaRa09} since in $W^{1,2}_n$ holds Helmholtz decomposition, compare \cite[Section~2.3]{BuMaRa09}. The procedure gives \eqref{psp}-\eqref{weak1}.

Up to now we proved the existence of the weak solution. Now we concentrate to its uniqueness. 
Let $({\bv_1,p_1})$ and $({\bv_2,p_2})$ be any two solutions of \nsa\ on the interval $[0,T]$, with initial values $\bv_1(0)$ and $\bv_2(0)$. Let us denote by  $\textbf{w}_{} =\bv_1-\bv_2$ and $\overline{\textbf{w}}_{} =\overline{\bv_1}-\overline{\bv_2}$.
% Then, we can write, see in \cite{A01} for more details.  
We subtract the equation for $\bv_2$ from the equation for $\bv_1$ and test it with $\bw$. We get using successively Korn's inequality, embedding theorem and Lemma~\ref{lem:st} 
\begin{equation}
 \begin{aligned}
\nm{\bw_{,t}}_{2}^{2} +4\nu \nm{\bD(\bw_{})}_{2}^{2}  &\le
\frac{C}{\nu} \nm{\overline{\bw}\bu_{1}}_{2}^{2}+\nu(\nm{\bw}_2^2+\nm{\bD(\bw)}_2^2)\\
&\le\frac{C}{\nu} \nm{\overline{\bw}}_{2,2}^2\nm{\bu_{1}}_{1,2}^{2}+\nu(\nm{\bw}_2^2+\nm{\bD(\bw)}_2^2)\\
&\le \nm{\bw}_{2}^2(\frac{C}{\nu}\nm{\bu_{1}}_{1,2}^{2}+\nu)+\nu\nm{\bD(\bw)}_2^2.\\
\end{aligned}
\end{equation}
Using Gronwall's inequality we conclude 
the continuous dependence of the solutions on the initial data in the $L^{\infty}(0,T,L^2_{\bn,\diver})$  norm. In particular, if ${\textbf{w}}^{}_{0}=0$ then ${\textbf{w}}=0$ and the solution $\bv$ is unique.
%Since the pressure part of the solution is uniquely determined by the velocity part and the condition \eqref{psp1} the proof of Theorem \ref{TH1} is finished.
Since the pressure part of the solution is uniquely determined by the velocity part and the condition \eqref{psp1}, the proof of the uniqueness is finished.

It remains to prove that the unique solution   $(\vec{v},p)$ verifies the local energy equality (\ref{local alpha}). To this end 
let us  take $\phi\vec{v}_{}$ as test function in (\ref{weak1}). We note that the regularity of $\overline{\vec{v}}$  ensure that all the terms are well defined. In particular the integral 
$$\displaystyle \int_{0}^{T} \int_{\Omega} \overline{\vec{v}_{}}\otimes \vec{v}_{} \cdot \nabla(\vec{v}_{} \phi)   \  d\vec{x} dt 
$$ 
is finite by using the fact that  $ \overline{\vec{v}}\otimes \vec{v}  \in L^2(0,T;{L}^{2})$ at least and $\phi\vec{v} \in 
L^2(0,T;{W}^{1,2})$.\\
An integration by parts combined with   the following identity 
\begin{equation}
\int_{\Omega} \overline{\vec{v}_{}} \otimes \vec{v}_{} \cdot \nabla(\vec{v}_{} \phi)   \  d\vec{x} = \frac{1}{2}\int_{\Omega} \overline{\vec{v}_{}} |\vec{v}_{}|^2   \cdot \nabla \phi   \  d\vec{x}
\end{equation}
yields that  for all $t \in (0,T)$ and for all  non negative functions  $\phi \in C^{\infty}$ and  $\spt \phi \subset\subset \Omega\times(0,T),$
  $(\vec{v}_{}, p_{})$ verifies  \begin{equation}
\label{local}
\begin{array}{lcc}
\displaystyle  \displaystyle   \frac{1}{2}\int_{\Omega} {|\vec{v}(t)|}^2\phi(t,\vec{x}) \ d\vec{x}   + \nu\int_{0}^{t}\int_{\Omega}|\bD (\vec{v}_{})|^{2}\phi \ d\vec{x}dt\\
\hskip 1cm   =   \displaystyle   \frac{1}{2}\int_{\Omega} {|\vec{v}_0|}^2\phi(0,\vec{x}) \ d\vec{x}  +  \displaystyle \int_{0}^{t}\int_{\Omega} \frac{|\vec{v}_{}|^{2}}{2} \phi_t \  d\vec{x}dt \\
 \hskip 2cm \displaystyle + \int_{0}^{t}\int_{\Omega}\displaystyle \left( \frac{|\vec{v}_{}|^{2}}{2} \overline{\vec{v}_{}}+ p\vec{v}_{} -  \nu [\bD(\vec{v})]\vec{v}\right) \cdot \nabla \phi   \  d\vec{x}dt+ \displaystyle \int_{0}^{t}  \langle \bef,  \vec{v}\phi\rangle \ dt. 
\end{array}
\end{equation}
Integrating by parts once more in the above equality, we obtain  (\ref{local alpha}) and  the proof of Theorem \ref{TH1} is finished.

\begin{Rem}
  Since $T>0$ was arbitrary the solution constructed in Theorem~\ref{thm:main1} may be uniquely extended for all time.
\end{Rem}
\subsection{ {Proof of Theorem~\ref{lem:reg1} }}

First we realize that by Theorem~\ref{thm:main1} we know existence of a solution $\bv$ of the problem \nsa\  such that $\bv\in \mathcal{C}_{}(0,T;L^2_{\bn,\diver}) \cap L^2(0,T;W^{1,2}_{\bn,\diver})$. By Corollary~\ref{cor:stokes2} we get that $\overline{\bv}\in L^{\infty}(0,T;W^{{2},2}_{\bn,\diver})\cap L^{2}(0,T;W^{3,2}_{\bn,\diver})$ and by embedding theorem $\overline{\bv}\in L^{\infty}(Q)$. 

We know that $\nabla\bv\in L^2(Q)$. From the regularity of $\overline{\bv}$ it follows that $\diver(\overline{\bv}\otimes \bv)=[\nabla\bv]{\overline\bv}\in L^2(Q)$. Applying Lemma~\ref{lem:evst} we get $\bv\in W^{1,2}(0,T;L^2_{\bn,\diver}) \cap L^2(0,T;W^{2,2}_{\bn,\diver})$ and by Lemma~\ref{lem:int1} $\nabla\bv\in L^{s(2)}(Q)$ with function $s(r):=r+r^2/(3+r)$. 

Let us assume $\nabla\bv\in L^{r}(Q)$ with $r\in[2,q]$, then $\diver(\overline{\bv}\otimes \bv)\in L^r(Q)$ and by Lemma~\ref{lem:evst}  $\bv\in W^{1,r}(0,T;L^r_{\bn,\diver}) \cap L^r(0,T;W^{2,r}_{\bn,\diver})$. Lemma~\ref{lem:int1} gives $\nabla\bv\in L^{s(r)}(Q)$. Since for all $r\geq 2$ it holds $s(r)>r$,  the statement of the theorem follows by iterating this procedure.
\qed

\section{Passage to the limit  as $\alpha\to0+$}

If we set $\alpha=0$ in \nsa\ we obtain the Navier Stokes system \ns
	\begin{align}
\diver \bv &=0, \\
\bv_{,t} + \diver ({\bv} \otimes \bv) -  2\nu \diver \bD( \bv)  &= -\nabla p + \bef,\\
\bv(0,x)&=\bv_0(x).
%-\alpha^2 \Delta \overline{\bv} +   \overline{\bv}
%&=\bv,
\end{align}

 Our aim here is to show that the solutions of \nsa\ from Theorem \ref{thm:main1} with $ \alpha >0$ converge to a suitable weak solution to \ns.
 The notion of a suitable weak solution of \ns was introduced by Scheffer \cite{S77}.
It is related to the notion of the weak solution, however, in addition, a local energy inequality is required (see (\ref{localinequality}) below).

First we examine a connection between $\bv$ and $\overline{\bv}$.

\begin{Lem}
\label{helmholtz}
Assume that $\vec{v}_{} \in  W^{1,2}_{\bn,\diver}$ and $\overline{\bv}$ is solution to \eqref{TKE} with boundary conditions \eqref{bc123}. Then 
\begin{equation}
\begin{aligned}
\alpha^2\nm{\bD(\bv-\overline\bv)}_{2}^2+&\frac{\alpha^2\lambda}{1-\lambda}\nm{\bv-\overline\bv}_{2,\partial\Omega}^2+
2\|\overline{\vec{v}_{}}-\vec{v}_{}\|_{2}^2\\
 &\le {\alpha^{{2}} }{} (\|\bD({\vec{v}_{}})\|_2^2 + \frac{\lambda}{1-\lambda}(\bv,\bv )_{\partial \Omega}).
\end{aligned}
\end{equation}
\end{Lem}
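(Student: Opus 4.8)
The plan is to test the equation \eqref{TKE} for $\overline{\bv}$ with a well-chosen test function and manipulate the resulting identity. Recall that $\overline{\bv}$ solves $-\alpha^2\diver\bD(\overline{\bv})+\overline{\bv}+\nabla\pi=\bv$ with $\diver\overline{\bv}=0$ and the boundary conditions \eqref{bc123}. The natural weak formulation, obtained by testing with $\bw\in W^{1,2}_{\bn,\diver}$ and integrating by parts (using $\diver\bw=0$ to kill the pressure term and the boundary condition $\lambda\overline{\bv}_{\btau}+(1-\lambda)(\bD(\overline{\bv})\bn)_{\btau}=0$ to rewrite the boundary integral as in \eqref{weak1}), reads
\begin{equation*}
2\alpha^2(\bD(\overline{\bv}),\bD(\bw))+\frac{2\alpha^2\lambda}{1-\lambda}(\overline{\bv},\bw)_{\partial\Omega}+2(\overline{\bv},\bw)=2(\bv,\bw).
\end{equation*}
First I would take $\bw=\overline{\bv}$ itself, which is admissible, to get the basic energy identity
\begin{equation*}
\alpha^2\nm{\bD(\overline{\bv})}_2^2+\frac{\alpha^2\lambda}{1-\lambda}\nm{\overline{\bv}}_{2,\partial\Omega}^2+\nm{\overline{\bv}}_2^2=(\bv,\overline{\bv}).
\end{equation*}
Then I would take $\bw=\bv-\overline{\bv}$ (also in $W^{1,2}_{\bn,\diver}$, since $\bv$ is), giving
\begin{equation*}
2\alpha^2(\bD(\overline{\bv}),\bD(\bv-\overline{\bv}))+\frac{2\alpha^2\lambda}{1-\lambda}(\overline{\bv},\bv-\overline{\bv})_{\partial\Omega}+2(\overline{\bv},\bv-\overline{\bv})=2(\bv,\bv-\overline{\bv})=2\nm{\bv-\overline{\bv}}_2^2+2(\overline{\bv},\bv-\overline{\bv}),
\end{equation*}
so the $(\overline{\bv},\bv-\overline{\bv})$ terms cancel and we are left with
\begin{equation*}
2\nm{\bv-\overline{\bv}}_2^2=2\alpha^2(\bD(\overline{\bv}),\bD(\bv-\overline{\bv}))+\frac{2\alpha^2\lambda}{1-\lambda}(\overline{\bv},\bv-\overline{\bv})_{\partial\Omega}.
\end{equation*}

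Next I would turn both terms on the right into quantities involving the difference $\bv-\overline{\bv}$ by writing $\bD(\overline{\bv})=\bD(\bv)-\bD(\bv-\overline{\bv})$ and $\overline{\bv}=\bv-(\bv-\overline{\bv})$, obtaining
\begin{equation*}
2\nm{\bv-\overline{\bv}}_2^2+2\alpha^2\nm{\bD(\bv-\overline{\bv})}_2^2+\frac{2\alpha^2\lambda}{1-\lambda}\nm{\bv-\overline{\bv}}_{2,\partial\Omega}^2=2\alpha^2(\bD(\bv),\bD(\bv-\overline{\bv}))+\frac{2\alpha^2\lambda}{1-\lambda}(\bv,\bv-\overline{\bv})_{\partial\Omega}.
\end{equation*}
Now I would apply Young's inequality to each term on the right: $2\alpha^2(\bD(\bv),\bD(\bv-\overline{\bv}))\le\alpha^2\nm{\bD(\bv)}_2^2+\alpha^2\nm{\bD(\bv-\overline{\bv})}_2^2$ and similarly $\frac{2\alpha^2\lambda}{1-\lambda}(\bv,\bv-\overline{\bv})_{\partial\Omega}\le\frac{\alpha^2\lambda}{1-\lambda}\nm{\bv}_{2,\partial\Omega}^2+\frac{\alpha^2\lambda}{1-\lambda}\nm{\bv-\overline{\bv}}_{2,\partial\Omega}^2$. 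Absorbing the $\alpha^2\nm{\bD(\bv-\overline{\bv})}_2^2$ and $\frac{\alpha^2\lambda}{1-\lambda}\nm{\bv-\overline{\bv}}_{2,\partial\Omega}^2$ terms into the left-hand side yields exactly the claimed inequality
\begin{equation*}
\alpha^2\nm{\bD(\bv-\overline{\bv})}_2^2+\frac{\alpha^2\lambda}{1-\lambda}\nm{\bv-\overline{\bv}}_{2,\partial\Omega}^2+2\nm{\overline{\bv}-\bv}_2^2\le\alpha^2\Big(\nm{\bD(\bv)}_2^2+\frac{\lambda}{1-\lambda}(\bv,\bv)_{\partial\Omega}\Big).
\end{equation*}

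I do not expect a serious obstacle here; the main point to be careful about is the justification of the weak formulation with the boundary term, i.e. that the regularity of $\overline{\bv}$ from Lemma~\ref{lem:st} (here $\overline{\bv}\in W^{2,2}_{\bn,\diver}$ since $\bv\in W^{1,2}\subset L^2$) makes the integration by parts legitimate and the trace $\overline{\bv}|_{\partial\Omega}$ well-defined, so that the boundary integrals $(\overline{\bv},\bw)_{\partial\Omega}$ and $(\bv,\bw)_{\partial\Omega}$ make sense. One should also note $\lambda/(1-\lambda)\ge0$ since $\lambda\in[0,1)$, so the sign manipulations and the absorption step are valid. Everything else is the two test-function choices, the cancellation, and Young's inequality.
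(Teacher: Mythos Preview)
Your proof is correct and follows essentially the same route as the paper: test the weak formulation of \eqref{TKE} with $\bv-\overline{\bv}$, rewrite $\bD(\overline{\bv})=\bD(\bv)-\bD(\bv-\overline{\bv})$ and $\overline{\bv}=\bv-(\bv-\overline{\bv})$, apply Young's inequality, and absorb. The only difference is cosmetic (you carry an overall factor of~$2$ from the start), and your first step---testing with $\bw=\overline{\bv}$ to get the ``basic energy identity''---is never used and can be dropped.
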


\begin{proof}
Testing the weak formulation of \eqref{TKE} with $\bv-\overline\bv$ yields
\begin{align*}
\alpha^{{2}}\|\bD({\vec{v}}) -\bD(\overline{\vec{v}_{}} )\|_2^2 &+ \alpha^2 \frac{\lambda}{1-\lambda}(\bv-\overline{\bv},\bv-\overline{\bv} )_{\partial \Omega}  + \|{\vec{v}_{}}-\overline{\vec{v}_{}}\|_2^2 \\
&=\alpha^2(\bD(\bv),\bD(\bv-\overline\bv))_\Omega+\frac{\lambda}{1-\lambda}(\bv,(\bv-\overline\bv))_{\partial\Omega}\\
%&\le \alpha^{{2}}\|\bD(\vec{v})\|_2  \| \bD({\vec{v}_{}})-\bD(\overline{\vec{v}} )\|_2 + \alpha^2 \frac{\lambda}{1-\lambda}(\bv,\bv-\overline{\bv} )_{\partial \Omega} \\
&\le \frac12\bigg({\alpha^{{2}} }{}\|\bD({\vec{v}_{}})\|_2^2 +  {\alpha^{{2}} }\| \bD({\vec{v}_{}})-\bD(\overline{\vec{v}} )\|_2^2  \\
&+\alpha^2 \frac{\lambda}{1-\lambda}(\bv,\bv )_{\partial \Omega}+\alpha^2 \frac{\lambda}{1-\lambda}(\bv-\overline{\bv},\bv-\overline{\bv} )_{\partial \Omega}\bigg)
\end{align*}
and the result follows at once.
\end{proof}

%\begin{Col}
%\label{stronglp}
%Under the same notation of Lemma \ref{helmholtz} and by a standard interpolation (see \eqref{embedd}) we deduce that 
%\begin{align} 
% \overline{\bv^{}}&\rightarrow \bv &&\textrm{strongly in } L^{q}(0,T;L^{q}(\Omega)^3), \textrm{ for all }  2 \le q<10/3
%  \end{align}
%\end{Col}
 
 \begin{Theorem}\label{thm:COVVERGENCE}
 Let $\alpha_j\to 0+$ as $j\to+\infty$, $\bv_{0} \in  L^{2}_{\bn, \diver}$, $\bef  \in L^{2}(0,T;W^{-1,2}_{\bn})$.
 Let $\vec{v}^{\alpha_j}$ be the unique solution of \nsa\ with \eqref{ID}-\eqref{bc123} and $\alpha=\alpha_j$. 
%Let $\vec{v}$ be any solution for the Navier Stokes equations  associeted with the same initial data  $\vec{v}_{0} $  and  $\vec{f}$.
 Then there is a subsequence of $\{\alpha_j\}$, we denote it again $\{\alpha_j\}$, $\bv\in C_{weak}(0,T; L^2_{\bn,\diver})\cap L^2(0,T;W^{1,2}_{\bn,\diver})$, $p\in L^{5/3}(\Omega\times(0,T))$ with $\bv_t\in (L^{5/2}(0,T;W^{1,5/2}_{\bn}))^*$ and $\bv(0)=\bv_0$ such that as $j\to+\infty$
\begin{align} 
 \bv^{\alpha_j}&\rightharpoonup \bv &&\textrm{weakly in } L^{2}(0,T;W^{1,2}), \label{alphazero-1}\\
 \bv^{\alpha_j}_{,t}&\rightharpoonup \bv_{,t} &&\textrm{weakly in } (L^{5/2}(0,T;W^{1,5/2}_{\bn}))^*, \label{alphazero-2}\\
 \bv^{\alpha_j}&\rightarrow \bv &&\textrm{strongly in } L^{q}(0,T;L^{q}), \textrm{ for all } 1 \le q<10/3
  \label{alphazero}\\
  p^{\alpha_j}&\rightharpoonup p &&\textrm{weakly in } L^{5/3}(0,T;L^{5/3}). \label{palphazero}
  \end{align}
Consequently, $(\vec{v},p)$ is a weak dissipative solution of \ns with Navier boundary condition \eqref{bc1} and the initial condition \eqref{ID}, i.e. 
\begin{equation}
\begin{split}
\int_0^T \langle \bv_{,t}, \bw \rangle  -  ({\bv} \otimes \bv, \nabla
\bw) +
\frac{2\nu\lambda}{1-\lambda}(\bv, \bw)_{\partial \Omega} +  2 \nu(
\bD(\bv), \bD(\bw) )\; dt\\
= \int_0^T (p, \diver \bw) +  \langle \bef, \bw \rangle \; dt
\qquad \textrm{ for all } \bw\in L^{\frac{5}{2}}(0,T; W^{1,\frac{5}{2}}_{\bn}).
\end{split}\label{nsweak1001}
\end{equation}

Moreover, the solution ($\vec{v}, p)$ satisfies the following local energy inequality
\begin{equation}
\label{localinequality}
\begin{array}{lcc}
\displaystyle \frac{1}{2}\int_{\Omega} (|{\vec{v}}|^2\phi)(t,\vec{x}) \ d\vec{x}+ \nu\int_{0}^{t}\int_{\Omega}|\nabla \vec{v}_{}|^{2}\phi \ d\vec{x}dt\\
\hskip 1cm  \displaystyle  \le   \displaystyle   \frac{1}{2}\int_{\Omega} {|\vec{v}_0}|^2\phi(0,\vec{x}) \ d\vec{x}  +  \displaystyle \int_{0}^{t}\int_{\Omega} \frac{|\vec{v}_{}|^{2}}{2} \left(\phi_t + \nu \Delta \phi \right) \\
 \hskip 2cm \displaystyle + \int_{0}^{t}\int_{\Omega}\displaystyle \left( \frac{|\vec{v}_{}|^{2}}{2} {\vec{v}_{}}+ p\vec{v}_{}\right) \cdot \nabla \phi   \  d\vec{x}dt+ \displaystyle \int_{0}^{t}  \langle \bef,  \vec{v}\phi\rangle \ dt 
\end{array}
\end{equation}
for a.e. $t \in (0,T)$ and for all  non negative function  $\phi \in C^{\infty}$ and  supp $\phi \subset\subset \Omega\times(0,T).$
\end{Theorem}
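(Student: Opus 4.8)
The strategy is the usual one for a vanishing--regularization limit: derive a priori bounds on $(\bv^{\alpha_j},p^{\alpha_j})$ that are \emph{uniform in $\alpha_j$}, extract weak and strong limits, and pass to the limit both in the weak formulation \eqref{weak1} and in the local energy equality \eqref{local alpha}. \emph{Step 1 ($\alpha$-uniform estimates).} Testing \eqref{weak1} with $\bw=\bv^{\alpha_j}$, which is an admissible test function, the convective term drops, since $\diver\overline{\bv}^{\alpha_j}=0$ and $\bv^{\alpha_j}\cdot\bn=0$ force $(\diver(\overline{\bv}^{\alpha_j}\otimes\bv^{\alpha_j}),\bv^{\alpha_j})=0$; this yields the energy identity and, via Korn's inequality, a bound for $\bv^{\alpha_j}$ in $L^\infty(0,T;L^2_{\bn,\diver})\cap L^2(0,T;W^{1,2}_{\bn,\diver})$ depending only on $\bv_0$, $\bef$, $\nu$, $\lambda$, hence (interpolation as in \eqref{embedd}) in $L^{10/3}(\Omega\times(0,T))$. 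By Lemma~\ref{lem:st} with $q=2$ and by Lemma~\ref{helmholtz}, $\overline{\bv}^{\alpha_j}$ inherits an $\alpha$-uniform bound in $L^\infty(0,T;L^2)\cap L^2(0,T;W^{1,2})$, hence in $L^{10/3}(\Omega\times(0,T))$, and moreover $\nm{\overline{\bv}^{\alpha_j}-\bv^{\alpha_j}}_{L^2(\Omega\times(0,T))}\le C\alpha_j\to0$; therefore $\overline{\bv}^{\alpha_j}\otimes\bv^{\alpha_j}$ is bounded in $L^{5/3}(\Omega\times(0,T))$. Reconstructing the pressure from \eqref{weak1} as in \cite[Section~3.2]{BuMaRa09} through the Helmholtz decomposition of $W^{1,2}_{\bn}$, and using the $L^{5/3}$ bound on the convective term, the $L^2$ bound on $\bD(\bv^{\alpha_j})$ and $\bef\in L^2(0,T;W^{-1,2}_{\bn})$, gives $p^{\alpha_j}$ bounded in $L^{5/3}(\Omega\times(0,T))$ with $\int_\Omega p^{\alpha_j}\,d\bx=0$. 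Reading $\bv^{\alpha_j}_{,t}$ off the equation then produces a uniform bound in $(L^{5/2}(0,T;W^{1,5/2}_{\bn}))^*$.

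\emph{Step 2 (compactness and the limit equation).} The bounds of Step~1 give a subsequence with \eqref{alphazero-1}, \eqref{alphazero-2}, \eqref{palphazero}; the Aubin--Lions lemma (with $W^{1,2}_{\bn,\diver}\hookrightarrow\hookrightarrow L^2\hookrightarrow (W^{1,5/2}_{\bn})^*$) together with interpolation against the uniform $L^{10/3}$ bound gives \eqref{alphazero} for all $q<10/3$. Since $\overline{\bv}^{\alpha_j}-\bv^{\alpha_j}\to0$ in $L^2(\Omega\times(0,T))$, also $\overline{\bv}^{\alpha_j}\to\bv$ in $L^2$ and, by interpolation, in $L^q$ for every $q<10/3$; hence $\overline{\bv}^{\alpha_j}\otimes\bv^{\alpha_j}\to\bv\otimes\bv$ in $L^1(\Omega\times(0,T))$ and, being bounded in $L^{5/3}$, weakly in $L^{5/3}$. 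Every term of \eqref{weak1} tested against $\bw\in L^{5/2}(0,T;W^{1,5/2}_{\bn})$ then passes to the limit, yielding \eqref{nsweak1001}. The uniform bound on $\bv^{\alpha_j}_{,t}$ and $\bv^{\alpha_j}\rightharpoonup\bv$ give, after redefinition on a null set, $\bv\in C_{weak}(0,T;L^2_{\bn,\diver})$, and $\bv(0)=\bv_0$ follows from \eqref{weak1} by using test functions that do not vanish at $t=0$.

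\emph{Step 3 (local energy inequality).} Write \eqref{local alpha} for $(\bv^{\alpha_j},\overline{\bv}^{\alpha_j},p^{\alpha_j})$ and pass to the limit against a fixed non-negative $\phi\in C^\infty$ with $\spt\phi\subset\subset\Omega\times(0,T)$. The right-hand side converges termwise: the quadratic terms by \eqref{alphazero} with $q=2$; $\langle\bef,\bv^{\alpha_j}\phi\rangle$ by \eqref{alphazero-1}; the pressure term by \eqref{palphazero} paired with $\bv^{\alpha_j}\nabla\phi\to\bv\nabla\phi$ strongly in $L^{5/2}_{\mathrm{loc}}$; and the cubic term because $|\bv^{\alpha_j}|^2\to|\bv|^2$ strongly in $L^{3/2}_{\mathrm{loc}}$ (from \eqref{alphazero} with $q=3$) while $\overline{\bv}^{\alpha_j}\to\bv$ strongly in $L^3_{\mathrm{loc}}$, so that $\tfrac12|\bv^{\alpha_j}|^2\overline{\bv}^{\alpha_j}\to\tfrac12|\bv|^2\bv$ in $L^1_{\mathrm{loc}}$. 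On the left-hand side, $\tfrac12\int_\Omega|\bv^{\alpha_j}(t)|^2\phi(t)\,d\bx$ and $\nu\int_0^t\int_\Omega|\nabla\bv^{\alpha_j}|^2\phi$ are convex nonnegative functionals, hence weakly lower semicontinuous along \eqref{alphazero-1} and along the a.e.-$t$ weak $L^2(\Omega)$ convergence of $\bv^{\alpha_j}(t)$; taking $\liminf$ on the left and $\lim$ on the right turns the identity into \eqref{localinequality}, for a.e. $t\in(0,T)$. (The dissipative energy inequality follows by the same weak lower semicontinuity applied to the global energy identity.)

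\emph{Main obstacle.} The only genuinely delicate point is the passage to the limit in the transport term, both in \eqref{nsweak1001} and in \eqref{localinequality}. It requires two ingredients together: first, identifying $\lim\overline{\bv}^{\alpha_j}=\bv$, which is precisely what Lemma~\ref{helmholtz} provides; second, upgrading this to strong convergence in $L^q(\Omega\times(0,T))$ with $q$ large enough ($q=3$ suffices, as $3<10/3$) so that, paired with the strong $L^3$ convergence of $\bv^{\alpha_j}$, the cubic product $\tfrac12|\bv^{\alpha_j}|^2\overline{\bv}^{\alpha_j}$ converges in $L^1_{\mathrm{loc}}$. One must also check that the pressure bound is truly $\alpha$-independent, which hinges on using the $\alpha$-uniform $L^2(0,T;W^{1,2})$ bound on $\overline{\bv}^{\alpha_j}$ coming from Lemma~\ref{helmholtz}, not the $\alpha$-dependent elliptic estimate of Lemma~\ref{lem:st}; the rest is routine weak/weak-$*$ compactness plus Aubin--Lions.
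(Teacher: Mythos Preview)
Your proof is correct and follows essentially the same route as the paper: energy estimate $\Rightarrow$ $\alpha$-uniform bounds on $\bv^{\alpha_j}$ in $L^\infty(0,T;L^2)\cap L^2(0,T;W^{1,2})$ and on $\overline{\bv}^{\alpha_j}$ in $L^{10/3}(Q)$, then pressure and time-derivative bounds, Aubin--Lions compactness, $\overline{\bv}^{\alpha_j}\to\bv$ via Lemma~\ref{helmholtz}, and passage to the limit in \eqref{weak1} and \eqref{local alpha} with weak lower semicontinuity on the dissipative terms. The only (cosmetic) difference is that the paper obtains the $\alpha$-uniform $L^{10/3}(Q)$ bound on $\overline{\bv}^{\alpha_j}$ directly from the second inequality in Lemma~\ref{lem:st} (the estimate $\|\overline{\bv}\|_q\le C(\alpha_0)\|\bv\|_q$, whose constant is uniform for $\alpha\le\alpha_0$), whereas you go through $L^\infty(0,T;L^2)\cap L^2(0,T;W^{1,2})$ via Lemma~\ref{helmholtz}; your remark that one must avoid ``the $\alpha$-dependent elliptic estimate of Lemma~\ref{lem:st}'' therefore slightly overstates the issue, since that lemma already contains the needed $\alpha$-uniform $L^q$ bound.
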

\begin{proof}[Proof of Theorem~\ref{thm:COVVERGENCE}] In order to prove Theorem \ref{thm:COVVERGENCE}, we need to find estimates that are independent of $\alpha$. In this proof constant $C>0$ is independent of $\alpha$.

First, we obtain, testing \eqref{weak1} by $\bv^\alpha$, the existence of $C>0$ that for all $\alpha$ we have
\begin{equation}\label{vao1}
\frac{2\nu\lambda}{1-\lambda}\nm{\bv^\alpha}_{L^2(0,T;L^2(\partial\Omega))}+\nm{\bv^\alpha}_{L^\infty(0,T,L^2)}+\nm{\bv^\alpha}_{L^2(0,T,W^{1,2})}\leq C.
\end{equation}
By standard interpolation we get 
\begin{equation}
\label{interpolationconvergence}
\nm{\bv^{\alpha}}_{L^{10/3}(0,T;L^{10/3})}\leq C.
\end{equation}

Lemma~\ref{lem:st} gives 
\begin{equation}\label{vao2}
\nm{\overline{\bv^{\alpha}}}_{L^{10/3}(0,T;L^{10/3})}\leq C.
\end{equation}

Since we consider Navier boundary conditions and in $W^{1,5/2}_{\bn}$ there holds Helmholtz decomposition, compare \cite[Section~2.3]{BuMaRa09},
% we get even  \marginpar{$(L^{\frac{4}{3}}(0,T,{W^{1,2}_{\bn}}^*))$?}
%\begin{equation}\label{vao4}
%\nm{\bv^\alpha_{,t}}_{(L^2(0,T,W^{1,2}_{\bn}))^*}\leq C.
%\end{equation}
%Now
 we can conclude from \eqref{vao1}, \eqref{interpolationconvergence} and \eqref{vao2} a uniform bound 
\begin{equation}\label{vao4}
\nm{\bv^\alpha_{,t}}_{(L^{5/2}(0,T;W^{1,5/2}_{\bn}))^*}\leq C.
\end{equation}
From \cite[Remark~3.1]{BuFeMa09} we know that for all $h\in L^\infty$ and a.e. $t\in(0,T)$ 
\begin{align*}
(p^{\alpha}(t),h)=&-(\overline{\bv}^{\alpha}(t) \otimes \bv^{\alpha}(t), \nabla^2
H) +
\frac{2\nu\lambda}{1-\lambda}(\bv^{\alpha}(t), \nabla H)_{\partial \Omega} \\
\quad &+  2\nu(
\bD(\bv^{\alpha}(t)), \nabla^2H )-\langle \bef(t), \nabla H\rangle, 
\end{align*}
holds, where $H$ is solution of $-\Delta H=h$ in $\Omega$, $\partial H/\partial \bn=0$ on $\partial\Omega$, $\int_\Omega H=0$. It is seen that integrability of the pressure follows from the integrability of $\overline{\bv} \otimes \bv$, $\bD(\bv)$, $\bef$ and $\bv$. It is standard to show from \eqref{vao1}, \eqref{interpolationconvergence} and \eqref{vao2} that 
\begin{equation}\label{vao3}
\nm{p^{\alpha}}_{L^\frac53(0,T;L^\frac53)}\leq C.
\end{equation}
                                                                        
It follows from \eqref{vao1}, \eqref{vao4} and \eqref{vao3} that we can find subsequence of $\{\alpha^j\}$ and $(\bv,p)$ such that \eqref{alphazero-1}, \eqref{alphazero-2}, \eqref{palphazero} hold and $\bv\in L^\infty(0,T;L^2)$. An another subsequence can be extracted such that
%\begin{equation}\label{}
% \bv^{\alpha_j} \rightarrow \bv &&\textrm{strongly in } L^{q}(0,T;(L^{q})^3), \textrm{ for all } 2 \le q<10/3
%  \end{equation}
 \eqref{alphazero} holds due to \eqref{vao1} and \eqref{vao4} by Aubin-Lions lemma.

To show that $(\bv,p)$ solves \eqref{nsweak1001} and \eqref{localinequality} it is necessary to pass to the limit $\alpha^j\to 0$ as $j\to +\infty$ in \eqref{weak1} and \eqref{local alpha}. This is standard if we realize that due to 
Lemma~\ref{helmholtz}
%Corollary~\ref{stronglp}
% Lemma~\ref{helmholtz}
 and \eqref{vao1}  we know that there exist $C >0$ such that 
 \begin{eqnarray}
\|\overline{\vec{v}^{\alpha}}-\vec{v}^{\alpha}\|_{L^2(0,T;L^2)}^2 \le C {\alpha^{{2}} },
\end{eqnarray}
and that this fact implies (together with \eqref{alphazero} and \eqref{vao2}) that, up to a subsequence, $\overline{\bv^{\alpha_j}}\to\bv$ in $L^q(0,T;L^q)$ for all $q \in [2,\frac{10}{3})$  as $j\to+\infty$. 

It remains to show weak continuity of $\bv$, which however follows from the fact that $\bv\in C(0,T;(W^{1,5/2}_{\bn})^*)$ by \eqref{dir2} and $\bv\in L^\infty(0,T;L^2)$.
\end{proof}

\section{Passage to the limit as $\lambda\to1-$}\label{sec:l}
Now we want to take care of dependence of the solution of the parameter $\lambda$ from \eqref{bc1} and \eqref{bc123}. We will denote this dependence by superscript $\lambda$. 

%\subsection{ Limit $\lambda\to1-$ and  $\alpha >0$ }
When $\lambda\to1-$ in (\ref{bc1}) we obtain the   homogeneous Dirichlet boundary condition   (i.e. the condition $\bv =0$ on $(0,T)\times\partial\Omega$). In this case the problem \nsa\  with homogeneous Dirichlet boundary condition can be obtained as a limit from \nsa\  with Navier slip boundary conditions for any $\alpha>0$ by  letting $\lambda$ in (\ref{bc1}) and (\ref{bc123}) tend
to 1-. 
 \begin{Theorem}\label{thm:llimit}
 Let $\lambda_j\to 1-$ as $j\to+\infty$, $\bv_{0} \in  L^{2}_{\bn, \diver}$, $\bef  \in L^{2}(0,T;W^{-1,2}_{\bn})$.
 Let $\bv^{\lambda_j}$ be the unique solution of \nsa\ with \eqref{ID}-\eqref{bc123} and $\lambda=\lambda_j$. 

 Then there is a subsequence of $\{\lambda_j\}$, we denote it again $\{\lambda_j\}$, $\bv\in C_{}(0,T; L^2_{\bn,\diver})\cap L^2(0,T;W^{1,2}_{0,\diver})$ with $\bv_t\in (L^{2}(0,T;W^{1,2}_{0,\diver})^*$ and $\bv(0)=\bv_0$ such that as $j\to+\infty$
\begin{align} 
 \bv^{\lambda_j}&\rightharpoonup \bv &&\textrm{weakly in } L^{2}(0,T;W^{1,2}), \label{dir1}\\
 \bv^{\lambda_j}_{,t}&\rightharpoonup\bv_{,t} &&\textrm{weakly in $(L^{2}(0,T;W^{1,2}_{0,\diver}))^*$}, \label{dir2}\\
 \bv^{\lambda_j}&\rightarrow \bv &&\textrm{strongly in } L^{q}(0,T;L^{q}), \textrm{ for all } 1 \le q<10/3,
  \label{dir3}
%\\p^{\lambda_j}&\rightharpoonup p &&\textrm{weakly in } L^{2}(0,T;W^{1,2}(\Omega)). \label{dir4}
  \end{align}
$\vec{v}$ is the unique weak solution to \nsa\ with homogeneous Dirichlet boundary condition and initial condition \eqref{ID}, i.e.
\begin{equation}
\begin{split}
\int_0^T \langle \bv_{,t}, \bw \rangle  -  (\overline{\bv} \otimes \bv, \nabla\bw) +2\nu(\bD(\bv), \bD(\bw) )\; dt
= \int_0^T \langle \bef, \bw \rangle \; dt
\end{split}\label{weak1dir}
\end{equation}
for all $\bw\in L^{2}(0,T; W^{1,2}_{0,\diver})$.

Let moreover $\bef\in L^q(0,T; L^q_{\bn,\diver})$ for some $q\geq 2$, $\bv_0\in W^{2-2/q,q}$ with $\bv_0=0$ on $\partial\Omega$ and $\diver\bv_0=0$ is $\Omega$. Then 
\begin{equation}\label{reg:strong-dir}
\bv\in L^q(0,T;W^{2,q}_{0,\diver})\cap W^{1,q}(0,T; L^q_{\bn,\diver})
\end{equation}
and a pressure can be reconstructed in such a way that $p\in L^q(0,T;W^{1,q})$ and \eqref{psp1} holds.
\end{Theorem}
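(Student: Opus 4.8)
The plan is to establish a priori estimates for $\bv^{\lambda_j}$ that are \emph{uniform} in $\lambda_j$, to extract limits, to prove uniqueness of the limiting Dirichlet problem (so that the whole family converges), and finally to bootstrap the regularity. First I test the weak formulation \eqref{weak1}, written for $\bv^{\lambda_j}$, with $\bw=\bv^{\lambda_j}$. Since $\alpha$ is fixed, Lemma~\ref{lem:st} bounds $\overline{\bv^{\lambda_j}}$ in terms of $\bv^{\lambda_j}$ with a constant independent of $\lambda_j$, and, combining this with Korn's inequality, one obtains
$$
\nm{\bv^{\lambda_j}}_{L^\infty(0,T;L^2)}+\nm{\bv^{\lambda_j}}_{L^2(0,T;W^{1,2})}+\sqrt{\frac{\lambda_j}{1-\lambda_j}}\,\nm{\bv^{\lambda_j}}_{L^2(0,T;L^2(\partial\Omega))}\le C
$$
uniformly in $\lambda_j$, hence also a uniform bound in $L^{10/3}(0,T;L^{10/3})$ by interpolation. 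The last term is decisive: it forces $\nm{\bv^{\lambda_j}}_{L^2(0,T;L^2(\partial\Omega))}\to0$. Since $\alpha$ is fixed, Lemma~\ref{lem:st} also gives $\overline{\bv^{\lambda_j}}$ bounded in $L^\infty(0,T;W^{2,2})\hookrightarrow L^\infty(\Omega\times(0,T))$, so $\diver(\overline{\bv^{\lambda_j}}\otimes\bv^{\lambda_j})=[\nabla\bv^{\lambda_j}]\,\overline{\bv^{\lambda_j}}$ is bounded in $L^2(0,T;L^2)$; testing \eqref{weak1} with divergence-free functions vanishing on $\partial\Omega$ — so that the boundary term \emph{and} the pressure term drop out — then yields a uniform bound for $\bv^{\lambda_j}_{,t}$ in $L^2(0,T;(W^{1,2}_{0,\diver})^*)$.

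\textbf{Passage to the limit.} From these bounds I extract a subsequence along which \eqref{dir1}, \eqref{dir2} hold, $\bv^{\lambda_j}\rightharpoonup\bv$ weakly-$*$ in $L^\infty(0,T;L^2)$, and, by the Aubin--Lions lemma (via $W^{1,2}_{\bn,\diver}\hookrightarrow\hookrightarrow L^2_{\bn,\diver}\hookrightarrow(W^{1,2}_{0,\diver})^*$) together with the uniform $L^{10/3}(Q)$ bound, \eqref{dir3}. The boundary estimate and weak continuity of the trace give that $\bv$ has vanishing trace, hence $\bv\in L^2(0,T;W^{1,2}_{0,\diver})$. Then I pass to the limit in \eqref{weak1} tested against a smooth divergence-free $\bw$ vanishing on $\partial\Omega$: the linear terms converge by \eqref{dir1}--\eqref{dir2}, the boundary and pressure terms vanish identically, and the convective term converges because $\bv^{\lambda_j}\to\bv$ strongly in $L^q(Q)$ for $q<10/3$ and $\overline{\bv^{\lambda_j}}\to\overline{\bv}$ strongly in $L^2(0,T;W^{2,2})$ — here $\overline{\bv}$ is the filter associated with the homogeneous Dirichlet condition, and this last convergence combines the continuity of the fixed-$\alpha$ filter with the convergence of the $\lambda$-filter to the Dirichlet filter as $\lambda\to1-$, which follows from an energy estimate on the difference in the spirit of Lemma~\ref{helmholtz}. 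A density argument extends \eqref{weak1dir} to all $\bw\in L^2(0,T;W^{1,2}_{0,\diver})$. The attainment of the initial datum and $\bv\in C(0,T;L^2_{\bn,\diver})$ follow in the usual way from $\bv\in L^2(0,T;W^{1,2}_{0,\diver})$, $\bv_{,t}\in(L^2(0,T;W^{1,2}_{0,\diver}))^*$ and $\bv^{\lambda_j}(0)=\bv_0$.

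\textbf{Uniqueness and regularity.} Uniqueness of the weak solution of the Dirichlet problem is obtained exactly as in Theorem~\ref{TH1}: subtract the equations for two solutions, test the difference with $\bw=\bv_1-\bv_2\in W^{1,2}_{0,\diver}$ (no boundary term now appears), estimate the convective difference via Lemma~\ref{lem:st} and Korn's inequality, and apply Gronwall's lemma; consequently the limit is independent of the subsequence and the whole family converges. For the last assertion I run the iteration from the proof of Theorem~\ref{lem:reg1}, with Lemma~\ref{lem:st} and Lemma~\ref{lem:evst} replaced by their classical counterparts for the homogeneous Dirichlet boundary condition: from $\bv\in C(0,T;L^2)\cap L^2(0,T;W^{1,2}_{0,\diver})$ one gets $\overline{\bv}\in L^\infty(\Omega\times(0,T))$, hence $\diver(\overline{\bv}\otimes\bv)\in L^2(Q)$, hence $\bv\in W^{1,2}(0,T;L^2)\cap L^2(0,T;W^{2,2}_{0,\diver})$ and $p\in L^2(0,T;W^{1,2})$, and Lemma~\ref{lem:int1} gives $\nabla\bv\in L^{s(2)}(Q)$ with $s(r)=r+r^2/(3+r)$; iterating (at the step with exponent $r\in[2,q]$ one uses $\bef\in L^q(0,T;L^q)\hookrightarrow L^r(0,T;L^r)$ and the embedding $W^{2-2/q,q}\hookrightarrow B^{2-2/r}_{r,r}$, which is valid on a bounded domain and preserves the vanishing trace, so that $\bv_0$ lies in the trace space needed at that step) and using $s(r)>r$ for $r\ge2$ one reaches exponent $q$ after finitely many steps, giving \eqref{reg:strong-dir} together with $p\in L^q(0,T;W^{1,q})$.

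\textbf{Main obstacle.} The delicate point is exactly the uniform-in-$\lambda$ control: the coefficient $2\nu\lambda/(1-\lambda)$ of the boundary term in \eqref{weak1} blows up as $\lambda\to1-$, so one must exploit its sign (to annihilate the trace of the limit) while simultaneously restricting the test functions to the kernel of the trace (so that this term never enters the limiting identity). A secondary technicality is verifying, at every step of the bootstrap rather than only at the last one, that the fixed datum $\bv_0$ still belongs to the interpolation/trace space associated with the current integrability exponent.
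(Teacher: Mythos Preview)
Your overall strategy matches the paper's, but there is a genuine gap in how you obtain the $\lambda$-uniform bounds on the filtered velocity. You invoke Lemma~\ref{lem:st} twice, asserting that its constants are independent of $\lambda_j$, and in particular that it yields a uniform $L^\infty(0,T;W^{2,2})$ bound on $\overline{\bv^{\lambda_j}}$. Lemma~\ref{lem:st} does not claim this: its constants come from ADN-type elliptic estimates and depend on the coefficients of the Robin boundary operator, hence on $\lambda/(1-\lambda)$, which blows up. The paper is explicit about this in the proof of Theorem~\ref{thm:last}, where it writes that ``in Lemma~\ref{lem:st} the dependence of constants on $\lambda$ is not addressed [so] we cannot use it,'' and instead tests \eqref{TKE} directly by $\overline{\bv^{\lambda}}$. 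That energy test gives, uniformly in $\lambda$, only
\[
\alpha^2\|\bD(\overline{\bv^{\lambda}})\|_2^2+\tfrac{\alpha^2\lambda}{1-\lambda}\|\overline{\bv^{\lambda}}\|_{2,\partial\Omega}^2+\|\overline{\bv^{\lambda}}\|_2^2\le \|\bv^{\lambda}\|_2\,\|\overline{\bv^{\lambda}}\|_2,
\]
hence $\overline{\bv^{\lambda}}\in L^\infty(0,T;W^{1,2})\hookrightarrow L^\infty(0,T;L^6)$ uniformly, and also $\|\overline{\bv^{\lambda}}\|_{L^2((0,T)\times\partial\Omega)}\to0$. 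This is exactly what the paper uses in its proof of the present theorem. With only $L^6$ in space for $\overline{\bv^{\lambda}}$ (rather than $L^\infty$), the product $\overline{\bv^{\lambda}}\bv^{\lambda}$ is bounded merely in $L^{5/2}(Q)$, not in $L^2(0,T;L^2)$, but this is still enough for the uniform bound on $\bv^{\lambda}_{,t}$ in $(L^2(0,T;W^{1,2}_{0,\diver}))^*$ and for the limit passage in the convective term.

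A related overreach is your claimed \emph{strong} convergence $\overline{\bv^{\lambda_j}}\to\overline{\bv}$ in $L^2(0,T;W^{2,2})$: even if uniform $W^{2,2}$ bounds were available this would not follow without an additional compactness argument, and it is unnecessary. Weak convergence of $\overline{\bv^{\lambda_j}}$ in $L^2(0,T;W^{1,2})$ together with the strong convergence \eqref{dir3} of $\bv^{\lambda_j}$ already suffices to pass to the limit in $(\overline{\bv^{\lambda_j}}\otimes\bv^{\lambda_j},\nabla\bw)$ for smooth $\bw$, and the identification of the weak limit with the Dirichlet-filtered $\overline{\bv}$ follows from the vanishing of $\|\overline{\bv^{\lambda_j}}\|_{L^2((0,T)\times\partial\Omega)}$ and uniqueness for the Dirichlet Stokes problem. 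Apart from this point, your argument (including the uniqueness step and the regularity bootstrap via the Dirichlet analogues of Lemmas~\ref{lem:st} and \ref{lem:evst}) is correct and parallels the paper's.
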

%\marginpar{here we should write a remark about the problem with the pressure with no-slip boundary condition with initial data in $L^2$}
\begin{proof}
Testing \eqref{weak1} with $\bv^\lambda$ we know that
\begin{equation}\begin{split}
\sup_{t\in(0,T)}\|{\bv^\lambda}(t)\|_{2}^2+ \nu\int_0^T \|{\bv^\lambda}(t)\|_{1,2}^2  dt + \nu \frac{\lambda}{1-\lambda} \int_0^T  (\bv^\lambda, \bv^\lambda)_{\partial \Omega} \le C(\bv_0,\bef) < \infty. \label{regularity NS lambda}
\end{split}
\end{equation}
Testing \eqref{TKE} by $\bv^\lambda$ we get using \eqref{regularity NS lambda} the estimate
\begin{equation}\label{regularity NS lambda-fil}
\nm{\overline\bv^\lambda}_{L^\infty(0,T;W^{1,2})}+\nm{\overline\bv^\lambda}_{L^\infty(0,T;L^6)}\leq C(\bv_0,\bef).
\end{equation}

From \eqref{regularity NS lambda} and \eqref{regularity NS lambda-fil} we get that 
$$
\nm{\overline\bv^\lambda\bv^\lambda}_{L^{5/2}(Q)}\leq C(\bv_0,\bef),
$$
and consequently
\begin{equation}\label{ns-td}
\nm{\bv^\lambda_{,t}}_{(L^{2}(0,T;W^{1,2}_{0,\diver}))^*}\leq C(\bv_0,\bef).
\end{equation}

Using \eqref{regularity NS lambda} and \eqref{ns-td} it is standard to find a subsequence $\{\lambda_j\}$ and $\bv$ such that \eqref{dir1}-\eqref{dir3} and \eqref{weak1dir} hold. The equation \eqref{weak1dir} is obtained letting $\lambda^j\to 1-$ in \eqref{weak1}. The boundary terms disappear since the test functions vanish on the boundary and the term with pressure is not present because the test functions are divergencefree.

Now we show that the trace of $\bv$ is zero. It follows from \eqref{regularity NS lambda} since 
$$
\int_0^T\nm{\bv^\lambda}_{2,\partial\Omega}^2\leq C\frac{1-\lambda}\lambda\to0\quad\mbox{as $\lambda\to 0+.$}
$$

%"Thus we can pass to limit $\lambda\to 0+. $ in (\ref{}) in order to show that the velocity part $\bv$  solves \eqref{weak1dir}."

Last, we need that $\bv(0)=\bv_0$. That follows from the initial condition for $\bv^{\lambda_j}(0)=\bv(0)$ since $\bv,\bv^{\lambda_j}\in C_{weak}(0,T;L^2_{\bn,\diver})$. (The last statement follows from the fact that $\bv, \bv^{\lambda_j}\in
C(0,T;(W^{1,5/2}_{\bn,\diver})^*)\cap L^\infty(0,T;L^2)\hookrightarrow C_{weak}(0,T;L^2_{\bn,\diver})$).

In the situation when $\bef\in (L^{2}(0,T; W^{1,2}_{0,\diver}))^*$ only it is not known how to construct pressure as a function $p\in L^{2}((0,T)\times\Omega)$, compare \cite[Section~IV.2.6]{So2001}. %\marginpar{is it also in Temam?} 
A different situation occurs if $\bef\in L^q(Q)$, $q\geq2$. Then the regularity \eqref{reg:strong-dir} of the solution $\bv$ can be shown as in Theorem~\ref{lem:reg1} since Lemmas~\ref{lem:st} and \ref{lem:evst} hold also under homogeneous Dirichlet boundary conditions, compare \cite{AG}, \cite{GiSo1991-jfa}. Having \eqref{reg:strong-dir} the pressure can be reconstructed on a.e. time level by de Rham's theorem and its regularity can be read from the equation.
\end{proof}
\section{Passage to the limit as $\lambda\to1-$ and $\alpha  \to 0+$ }
%\begin{Rem}
When $\lambda\to1-$ and $\alpha  \to 0+$ 
a theorem similar to Theorem~\ref{thm:llimit} %similar statement 
%as in \eqref{dir1}--\eqref{weak1dir} of Theorem~\ref{thm:llimit}
 can be proved.
% also for the Navier-Stokes problem with  homogeneous Dirichlet boundary condition. 
 \begin{Theorem}\label{thm:last}
Let $\lambda_j\to1-$, $\alpha_j\to 0+$, $\bv_{0} \in  L^{2}_{\bn, \diver}$, $\bef  \in L^{2}(0,T;W^{-1,2}_{\bn})$.
 Let $\vec{v}^{\lambda_j,\alpha_j}$ be the unique solution of \nsa\ with \eqref{ID}-\eqref{bc123}, $ \lambda_j=\lambda $ and   $\alpha=\alpha_j$. 
%Let $\vec{v}$ be any solution for the Navier Stokes equations  associeted with the same initial data  $\vec{v}_{0} $  and  $\vec{f}$.
 Then there is a subsequence of $\{\lambda_j, \alpha_j\}$, we denote it again $\{\lambda_j, \alpha_j\}$, $\bv\in C_{weak}(0,T; L^2_{\bn,\diver})\cap L^2(0,T;W^{1,2}_{0,\diver})$, with $\bv_t\in (L^{2}(0,T;W^{1,3}_{0,\diver}))^*$ and $\bv(0)=\bv_0$ such that as $j\to+\infty$
\begin{align} 
 \bv^{ \lambda_j, \alpha_j}&\rightharpoonup \bv &&\textrm{weakly in } L^{2}(0,T;W^{1,2}), \label{lambdaalphazero-1}\\
 \bv^{\lambda_j,\alpha_j}_{,t}&\rightharpoonup \bv_{,t} &&\textrm{weakly in } (L^{2}(0,T;W^{1,3}_{0,\diver}))^*, \label{lambdaalphazero-2}\\
 \bv^{\lambda_j, \alpha_j}&\rightarrow \bv &&\textrm{strongly in } L^{q}(0,T;L^{q}), \textrm{ for all } 1 \le q<10/3
  \label{lambdaalphazero}
  %p^{\alpha_j}&\rightharpoonup p &&\textrm{weakly in } L^{5/3}(0,T;L^{5/3}). \label{palphazero}
  \end{align}
Consequently,  the velocity part $\vec{v}$  is a weak dissipative solution of the Navier Stokes equations  with homogeneous Dirichlet boundary condition   and the initial condition $\bv_0$, i.e. 
\begin{equation}
\begin{split}
\int_0^T \langle \bv_{,t}, \bw \rangle  -  ({\bv} \otimes \bv, \nabla
\bw)  +  2 \nu(
\bD(\bv), \bD(\bw) )\; dt
= \int_0^T    \langle \bef, \bw \rangle \; dt\\
\qquad \textrm{ for all } \bw\in L^{2}(0,T; W^{1,3}_{0,\diver}).
\end{split}\label{lmitnsweak1001}
\end{equation}
\end{Theorem}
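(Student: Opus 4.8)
The plan is to run the arguments of Theorems~\ref{thm:COVVERGENCE} and~\ref{thm:llimit} simultaneously: I will derive a priori bounds that are uniform in \emph{both} $\alpha_j$ and $\lambda_j$, extract a subsequence converging to some $\bv$, and pass to the limit in the weak formulation \eqref{weak1}. Since $\lambda_j\to1-$, it is natural to test only with divergence-free functions vanishing on $\partial\Omega$, so that the pressure term and the slip boundary term disappear altogether; this is why the target space for $\bv_{,t}$ in \eqref{lambdaalphazero-2} and the test space in \eqref{lmitnsweak1001} are built from $W^{1,3}_{0,\diver}$. Throughout, $C$ denotes a constant independent of $j$.

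First I would test \eqref{weak1} (with $\lambda=\lambda_j$, $\alpha=\alpha_j$) by $\bv^{\lambda_j,\alpha_j}$; the convective term vanishes because $\diver\overline{\bv^{\lambda_j,\alpha_j}}=0$ and $\overline{\bv^{\lambda_j,\alpha_j}}\cdot\bn=0$, and, exactly as in \eqref{regularity NS lambda},
\begin{equation*}
\sup_{t\in(0,T)}\|\bv^{\lambda_j,\alpha_j}(t)\|_2^2+\nu\int_0^T\|\bv^{\lambda_j,\alpha_j}\|_{1,2}^2\,dt+\nu\frac{\lambda_j}{1-\lambda_j}\int_0^T(\bv^{\lambda_j,\alpha_j},\bv^{\lambda_j,\alpha_j})_{\partial\Omega}\,dt\le C,
\end{equation*}
hence $\|\bv^{\lambda_j,\alpha_j}\|_{L^{10/3}(Q)}\le C$ by interpolation. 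The decisive point is that the slip term above is bounded \emph{uniformly in} $\lambda_j$, so the right-hand side of the inequality of Lemma~\ref{helmholtz}, integrated in time, is $\le C\alpha_j^2$ however close $\lambda_j$ is to $1$; this yields
\begin{equation*}
\|\overline{\bv^{\lambda_j,\alpha_j}}-\bv^{\lambda_j,\alpha_j}\|_{L^2(Q)}^2+\|\bD(\overline{\bv^{\lambda_j,\alpha_j}}-\bv^{\lambda_j,\alpha_j})\|_{L^2(Q)}^2\le C\alpha_j^2,
\end{equation*}
and then, by Korn's inequality (with a $\lambda$-independent constant) and Lemma~\ref{lem:st} with $q=2$, the uniform bounds $\|\overline{\bv^{\lambda_j,\alpha_j}}\|_{L^\infty(0,T;L^2)\cap L^2(0,T;W^{1,2})}\le C$ and $\|\overline{\bv^{\lambda_j,\alpha_j}}\|_{L^{10/3}(Q)}\le C$. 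For $\bw\in L^2(0,T;W^{1,3}_{0,\diver})$ the boundary and pressure terms in \eqref{weak1} are zero, and since by interpolation $\overline{\bv^{\lambda_j,\alpha_j}},\bv^{\lambda_j,\alpha_j}\in L^4(0,T;L^3)$ uniformly, so that $\overline{\bv^{\lambda_j,\alpha_j}}\otimes\bv^{\lambda_j,\alpha_j}\in L^2(0,T;L^{3/2})$ uniformly, one reads off $\|\bv^{\lambda_j,\alpha_j}_{,t}\|_{(L^2(0,T;W^{1,3}_{0,\diver}))^*}\le C$.

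Next I would apply the Aubin--Lions lemma with the triple $W^{1,2}_{\bn,\diver}\hookrightarrow\hookrightarrow L^2_{\bn,\diver}\hookrightarrow(W^{1,3}_{0,\diver})^*$ to obtain a subsequence with $\bv^{\lambda_j,\alpha_j}\to\bv$ strongly in $L^2(Q)$; interpolating against the $L^{10/3}(Q)$ bound gives \eqref{lambdaalphazero}, while \eqref{lambdaalphazero-1} and \eqref{lambdaalphazero-2} follow by reflexivity, with $\bv\in L^\infty(0,T;L^2)$. From the $O(\alpha_j)$ estimate, $\overline{\bv^{\lambda_j,\alpha_j}}\to\bv$ in $L^q(Q)$ for every $q<10/3$. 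I would then take first a divergence-free $\bw\in C^\infty_c((0,T)\times\Omega)$ in \eqref{weak1}: the $\bv_{,t}$-term passes by \eqref{lambdaalphazero-2}, the convective term because $\overline{\bv^{\lambda_j,\alpha_j}}\otimes\bv^{\lambda_j,\alpha_j}\to\bv\otimes\bv$ in $L^1(Q)$ and $\nabla\bw\in L^\infty$, the viscous term by \eqref{lambdaalphazero-1}, and the boundary and pressure terms are identically zero; this gives \eqref{lmitnsweak1001} for smooth $\bw$, and the uniform bound on $\bv_{,t}$ extends it to all $\bw\in L^2(0,T;W^{1,3}_{0,\diver})$ by density. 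That $\bv$ has zero trace, so $\bv\in L^2(0,T;W^{1,2}_{0,\diver})$, follows as in Theorem~\ref{thm:llimit} from $\int_0^T\|\bv^{\lambda_j,\alpha_j}\|_{2,\partial\Omega}^2\le C(1-\lambda_j)/\lambda_j\to0$ and weak continuity of the trace; $\bv\in C_{weak}(0,T;L^2_{\bn,\diver})$ follows from $\bv\in C(0,T;(W^{1,3}_{0,\diver})^*)\cap L^\infty(0,T;L^2)$, and $\bv(0)=\bv_0$ because $\bv^{\lambda_j,\alpha_j}(0)=\bv_0$ for all $j$. The dissipativity is obtained by dropping the nonnegative slip term in the energy equality for $\bv^{\lambda_j,\alpha_j}$ and using weak lower semicontinuity together with $\bv^{\lambda_j,\alpha_j}(t)\to\bv(t)$ in $L^2$ for a.e. $t$; if a local energy inequality is also wanted, one passes to the limit in \eqref{local alpha} exactly as in the proof of Theorem~\ref{thm:COVVERGENCE}, the boundary playing no role there since $\spt\phi\subset\subset\Omega$.

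The hard part is checking that \emph{nothing degenerates when $\alpha_j\to0$ and $\lambda_j\to1-$ together}: concretely, that the slip term $\frac{\lambda_j}{1-\lambda_j}\int_0^T(\bv^{\lambda_j,\alpha_j},\bv^{\lambda_j,\alpha_j})_{\partial\Omega}\,dt$ is uniformly bounded, so that the right-hand side of Lemma~\ref{helmholtz} really is $O(\alpha_j^2)$ irrespective of $\lambda_j$ (which is what forces $\overline{\bv^{\lambda_j,\alpha_j}}\to\bv$), and that the test space $L^2(0,T;W^{1,3}_{0,\diver})$ is chosen large enough to recover the equation but small enough that the convective term, controlled only in $L^2(0,T;L^{3/2})$ after the limit, still pairs with $\nabla\bw$. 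Once these two bookkeeping issues are settled, the remainder is a routine combination of the proofs of Theorems~\ref{thm:COVVERGENCE} and~\ref{thm:llimit}.
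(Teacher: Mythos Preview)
Your overall strategy coincides with the paper's, but two points deserve correction.

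First, the displayed estimate
\[
\|\overline{\bv^{\lambda_j,\alpha_j}}-\bv^{\lambda_j,\alpha_j}\|_{L^2(Q)}^2+\|\bD(\overline{\bv^{\lambda_j,\alpha_j}}-\bv^{\lambda_j,\alpha_j})\|_{L^2(Q)}^2\le C\alpha_j^2
\]
is not what Lemma~\ref{helmholtz} gives: the symmetric-gradient term on the left carries a prefactor $\alpha_j^2$ in the lemma, so after dividing and integrating in time you only obtain $\int_0^T\|\bD(\overline{\bv^{\lambda_j,\alpha_j}}-\bv^{\lambda_j,\alpha_j}})\|_2^2\,dt\le C$, i.e.\ a uniform bound, not decay in $\alpha_j$. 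This is harmless for your argument, since boundedness is all you use afterwards, but the statement as written is false.

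Second, the paper explicitly refrains from invoking Lemma~\ref{lem:st} here because the dependence of its constants on $\lambda$ is not tracked, and you need estimates uniform as $\lambda_j\to1-$. Your appeal to ``Lemma~\ref{lem:st} with $q=2$'' for the $L^\infty(0,T;L^2)$ bound on $\overline{\bv^{\lambda_j,\alpha_j}}$ is therefore not justified by the lemma as stated. The fix is immediate and is precisely what the paper does: test \eqref{TKE} with $\overline{\bv^{\lambda_j,\alpha_j}}$ to get $\|\overline{\bv^{\lambda_j,\alpha_j}}\|_2\le\|\bv^{\lambda_j,\alpha_j}\|_2$ with constant $1$, independent of both $\lambda_j$ and $\alpha_j$. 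With this in hand, your route through Korn to a uniform $L^2(0,T;W^{1,2})$ bound on $\overline{\bv^{\lambda_j,\alpha_j}}$ (and hence $L^{10/3}(Q)$) is a slight strengthening of the paper's argument, which stops at $\overline{\bv^{\lambda_j,\alpha_j}}\in L^\infty(0,T;L^2)$ and pairs it directly with $\bv^{\lambda_j,\alpha_j}\in L^2(0,T;L^6)$ to place the convective term in $L^2(0,T;L^{3/2})$. Either way the $(L^2(0,T;W^{1,3}_{0,\diver}))^*$ bound on the time derivative and the limit passage go through exactly as you describe.
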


\begin{proof}
The proof of this Theorem follows the lines of the proof of  Theorem  \ref{thm:COVVERGENCE} and Theorem  \ref{thm:llimit}. % so we omit the details.
First we obtain uniform estimates \eqref{vao1} and \eqref{interpolationconvergence}. Now we need to reconstruct a uniform estimates for $\overline{\bv}^{\lambda_j,\alpha_j}$. Since in Lemma~\ref{lem:st} the dependence of constants on $\lambda$ is not addressed we cannot use it. Instead we test \eqref{TKE} with $\overline{\bv}^{\lambda_j,\alpha_j}$ and get a uniform estimate 
\begin{equation}\label{vao30}
\nm{\overline{\bv}^{\lambda_j,\alpha_j}}_{L^\infty(0,T;L^2)}< C.
\end{equation}
It follows 
$$
\nm{|\overline{\bv}^{\lambda_j,\alpha_j}||{\bv}^{\lambda_j,\alpha_j}|}_{L^2(0,T;L^{\frac32})}<C\quad\mbox{and}\quad
\nm{{\bv}^{\lambda_j,\alpha_j}_{,t}}_{(L^2(0,T;W^{1,3}_{0,\diver}))^*}<C.
$$
Consequently we can extract a subsequence $(\lambda_j,\alpha_j)$ that \eqref{lambdaalphazero-1}, \eqref{lambdaalphazero-2} and by Aubin-Lions lemma also \eqref{lambdaalphazero} hold. Combining Lemma~\ref{helmholtz} with the estimate \eqref{vao1} we get that $\overline{\bv}^{\lambda_j,\alpha_j}\to \bv$ in $L^2(Q)$ and by \eqref{vao30} also in $L^s(0,T;L^2)$ for all $s>2$ as $j\to+\infty$. The limit function $\bv$ must be traceless due to \eqref{vao1}. With this information it is standard to pass to the limit as $j\to+\infty$ in \eqref{weak1} to get \eqref{lmitnsweak1001}.
\end{proof}
  
\begin{Rem}
Generally with  homogeneous Dirichlet boundary condition the existence of the pressure term $p$ of the Navier Stokes equations   is not obvious and the pressure may not exist, compare \cite{27}. 
\end{Rem}
 \begin{Rem}
 Finally we would like to notice that the results reported here can be extended to the Navier-Stokes-Voigt
 equations (see in \cite{CLT06} and the references inside)  with Navier slip boundary condition. 
%, a subject of future work by the author(s)?.
 \end{Rem}

\end{document}